%
%
%
\documentclass[12pt]{amsart}
\usepackage[headings]{fullpage}
\usepackage{amssymb,amsmath,epic,eepic,epsfig}
\usepackage{epstopdf}
\usepackage{graphicx}
\usepackage{texdraw}
\usepackage{url}
\usepackage[bookmarks=true,%
    colorlinks=true,%
    linkcolor=blue,%
    citecolor=blue,%
    filecolor=blue,%
    menucolor=blue,%
    urlcolor=blue,%
    breaklinks=true]{hyperref}

\newtheorem{theorem}{Theorem}[section]
\newtheorem{proposition}[theorem]{Proposition}
\newtheorem{lemma}[theorem]{Lemma}
\newtheorem{corollary}[theorem]{Corollary}

\newtheorem{definition}[theorem]{Definition}
\newtheorem{remark}[theorem]{Remark}
\newtheorem{example}[theorem]{Example}


\def\BZ{\mathbb Z}

\def\BQ{\mathbb Q}
\def\BR{\mathbb R}



\def\calP{\mathcal P}

\def\calQ{\mathcal Q}

\def\calM{\mathcal{M}}

\def\l{\lambda}
\def\Ga{\Gamma}

\def\la{\langle}
\def\ra{\rangle}

\def\Ga{\Gamma}

\def\longto{\longrightarrow}

\def\PSL{\mathrm{PSL}}

\def\Vol{\mathrm{Vol}}


\def\Res{\mathrm{Res}}

\def\Isom{\mathrm{Isom}}
\def\vol{\mathrm{vol}}
\def\Comm{\mathrm{Comm}}

\def\qed{ $\sqcup\!\!\!\!\sqcap$}

\def\Re{\mbox{\rm{Re}}}

\begin{document}


\title[Constructing 1-cusped isospectral non-isometric 
hyperbolic 3-manifolds]{
Constructing 1-cusped isospectral non-isometric 
hyperbolic 3-manifolds}
\author{Stavros Garoufalidis}
\address{School of Mathematics \\
         Georgia Institute of Technology \\
         Atlanta, GA 30332-0160, USA \newline 
         {\tt \url{http://www.math.gatech.edu/~stavros }}}
\email{stavros@math.gatech.edu}
\author{Alan W. Reid}
\address{Department of Mathematics \\
        University of Texas \\
        1 University Station C1200 \\
        Austin, TX 78712-0257, USA \newline
         {\tt \url{http://www.ma.utexas.edu/users/areid }}}
\email{areid@math.utexas.edu}
\thanks{The authors were supported in part by National Science 
Foundation. \\
\newline
1991 {\em Mathematics Classification.} Primary 57N10. Secondary 57M25.
\newline
{\em Key words and phrases: Isospectral manifolds, cusped hyperbolic 
manifold, Sunada method, knot complement.}
}

\date{August 1, 2016}


\begin{abstract}
We construct infinitely many examples of pairs of isospectral but
non-isometric $1$-cusped hyperbolic $3$-manifolds. These examples 
have infinite discrete spectrum and the same Eisenstein
series.  Our constructions are based on an application of Sunada's
method in the cusped setting, and so in addition our pairs are
finite covers of the same degree of a 1-cusped hyperbolic 3-orbifold
(indeed manifold) and also have the same complex length-spectra.
Finally we prove that any finite volume hyperbolic 3-manifold
isospectral to the figure-eight knot complement is homeomorphic to the
figure-eight knot complement. 
\end{abstract}

\maketitle

{\footnotesize
\tableofcontents
}


\section{Introduction}

Since Kac \cite{Kac} formulated the question: {\em Can you hear the shape
of a drum?}, there has been a rich history in 
constructing isospectral but non-isometric manifolds in various settings.
We will not describe this in any detail here, but 
simply refer the reader to \cite{Gor} for a survey.  The
main purpose of this note is to prove the following result (see also
Theorem \ref{main_restate} for a more detailed statement).

\begin{theorem}
\label{main}
There are infinitely many pairs of finite volume orientable 1-cusped 
hyperbolic 3-manifolds that are isospectral but non-isometric.
\end{theorem}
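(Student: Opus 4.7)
The plan is to adapt Sunada's method to the setting of finite-volume 1-cusped hyperbolic 3-manifolds. I would seek a 1-cusped hyperbolic 3-orbifold $N$ together with a surjection $\rho\colon \pi_1(N) \twoheadrightarrow G$ onto a finite group carrying a \emph{Gassmann triple} $(G, H_1, H_2)$: non-conjugate subgroups that meet every conjugacy class of $G$ in the same number of elements. Let $N_i \to N$ denote the cover corresponding to $\rho^{-1}(H_i)$. The key algebraic input is the $G$-module isomorphism $\mathrm{Ind}_{H_1}^G \mathbf{1} \cong \mathrm{Ind}_{H_2}^G \mathbf{1}$. Extending Sunada's transplantation argument to the cusped setting, this should yield equality of the discrete Laplace spectra, of the Eisenstein series, and of the complex length spectra of $N_1$ and $N_2$, exactly as promised in the abstract.

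Two arrangements must then be made simultaneously. First, for each $N_i$ to be 1-cusped, the number of $\rho(P)$-orbits on $G/H_i$ must equal one, i.e.\ $G = H_i\, \rho(P)$, where $P \le \pi_1(N)$ is the peripheral subgroup; this is a combinatorial condition on the triple and on the image $\rho(P)$. Second, to certify non-isometry I would compare a discriminating invariant such as first homology $H_1(N_i;\BZ)$ or its torsion; since $H_1$ and $H_2$ are not conjugate in $G$, appropriate choices produce manifestly distinct covers.

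To produce infinitely many pairs, I would take a parent manifold $M$ with at least two cusps carrying the Gassmann surjection $\rho$ and perform Dehn filling on all but one cusp. By Thurston's hyperbolic Dehn surgery theorem, all but finitely many filling slopes yield hyperbolic 1-cusped manifolds; and provided the chosen slopes lie in the kernel of $\rho$ restricted to each filled cusp, the Gassmann triple descends to the filling and produces an isospectral pair $(N_1^{(k)}, N_2^{(k)})$. Infinitely many admissible slopes then yield infinitely many isospectral non-isometric pairs, pairwise distinguished for example by their volumes, which accumulate (from below) at $\mathrm{vol}(M)$ and are therefore all distinct for cofinal subfamilies.

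The main obstacle is the explicit construction: one needs a manifold $M$ admitting a surjection onto a finite group $G$ that both contains a Gassmann triple $(G, H_1, H_2)$ and simultaneously satisfies the cusp-compatibility $G = H_i\, \rho(P)$ for $i=1,2$. Since the classical Gassmann examples live in very particular finite groups (such as $\PSL(2,7)$, $\mathrm{PGL}(2,9)$, or small solvable extensions), a natural place to look is among arithmetic link complements inside Bianchi orbifolds, where congruence quotients supply many controlled finite-group covers and the images of peripheral subgroups can be computed explicitly.
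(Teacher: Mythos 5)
Your overall architecture matches the paper's: Sunada/Gassmann pairs in $\PSL(2,7)$ or $\PSL(2,11)$, the transitivity condition $G=H_i\,\rho(P)$ to keep the covers $1$-cusped, and Dehn filling all but one cusp of a multi-cusped parent along slopes killed by $\rho$ to manufacture infinitely many pairs. But there is a genuine gap in how you certify non-isometry. Non-conjugacy of $H_1$ and $H_2$ in $G$ does not force the covers to have different homology; in the examples the paper actually computes, the two index-$7$ covers have identical first homology $\BZ/2+\BZ/110+\BZ$ (and the two subgroups are interchanged by $\mathrm{Out}(\PSL(2,p))$), so the invariant you propose is blind to exactly the pairs you need to separate --- and in any case a cover-by-cover homology computation cannot certify non-isometry for infinitely many fillings at once. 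The missing idea is to require the (filled) base $M={\bf H}^3/\Gamma$ to be non-arithmetic and \emph{minimal in its commensurability class}, i.e.\ $\Gamma=\Comm(\Gamma)$: then any isometry between the covers is induced by $g\in\Isom({\bf H}^3)$ with $g\Gamma_1g^{-1}=\Gamma_2$, hence $g\in\Comm(\Gamma)=\Gamma$, and projecting to $G$ conjugates $H_1$ to $H_2$, a contradiction. This is also why your instinct to work inside arithmetic Bianchi orbifolds points the wrong way: there the commensurator is dense and this argument is unavailable. The paper instead starts from non-arithmetic links (e.g.\ $9^2_{34}$) known to equal their commensurators, and uses a result of Reid that all but finitely many fillings remain minimal in their commensurability classes.

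A second, smaller omission: you never address whether the discrete spectrum is nonempty or infinite, which the paper builds into its very definition of isospectrality for cusped manifolds. Transplantation gives an isomorphism $L^2_{disc}(M_1)\cong L^2_{disc}(M_2)$ commuting with the Laplacian, but says nothing about these spaces being infinite-dimensional. The paper handles this with a Venkov-type criterion: a non-arithmetic $1$-cusped manifold that is \emph{not} minimal in its commensurability class has infinite discrete spectrum, and the covers qualify precisely because they properly cover the minimal element. The remaining ingredients of your sketch (equal Eisenstein series and hence equal scattering poles, equality of discrete spectra via the induced-module isomorphism, equal complex length spectra, and the peripheral-image count via a $p$-rep having image of order $p$ coprime to $|H_i|$) are carried out in the paper essentially as you describe.
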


Since we are working with cusped hyperbolic 3-manifolds, the 
statement of the theorem requires some clarification. 
Indeed, one can reasonably ask, {\em what does isospectral mean for
cusped hyperbolic 3-manifolds}. We address this in Section 2, where we
indicate the differences with the closed case. Our examples appear to be
the first examples of 1-cusped hyperbolic 3-manifolds that are known to
be isospectral and non-isometric. On the other hand, 
there has been considerable interest in this for 
surfaces (both non-compact finite area, and infinite area convex
cocompact, see \cite{BroD} and the survey \cite{GPS}). In fact
in \cite{GPS}, they raise the problem  (Problem 1.2 of \cite{GPS})
of finding such examples in much more generality. Note that these papers use
the terms {\em isoscattering} or {\em isopolar}, but we prefer
to stick with isospectral.

Theorem \ref{main} is well-known for closed hyperbolic 3-manifolds, either 
using the arithmetic methods of \cite{Vig}, or the method of Sunada (\cite{Su}
and which we  recall below), as in \cite[p.225]{Reid}. As in this latter 
setting, our construction also uses Sunada's method, but we need
some additional control.  In addition to proving the
existence of infinitely many pairs of examples of isospectral
$1$-cusped hyperbolic 3-manifolds, we also give more concrete
examples of isospectral manifolds arising as low degree
covers of small volume 1-cusped hyperbolic 3-manifolds arising in 
the census of hyperbolic manifolds of 
\texttt{Snap} and \texttt{SnapPy} \cite{snap,snappy}.

One motivation for Theorem \ref{main} was  to investigate the nature of the 
discrete spectrum of $1$-cusped hyperbolic 3-manifolds.  There has been
considerable interest in this for non-compact surfaces of finite area
(see \cite{Mu}, \cite{Sa1} and \cite{Sa2} to name a few), but little 
seems known in dimension $3$.  We discuss this further in Section 8, and in
particular we prove the following.

\begin{theorem}
\label{main_fig8}
Let $M$ denote the complement of the figure-eight knot in $S^3$. Suppose
that $N$ is a finite volume hyperbolic 3-manifold which is isospectral with
$M$. Then $N$ is homeomorphic to $M$.
\end{theorem}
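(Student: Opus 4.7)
The plan is to narrow down the possible $N$ to a finite list using spectral rigidity for volume and cusp count, then eliminate all alternatives to the figure-eight knot complement itself.

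First, under the notion of isospectrality developed in Section 2 (matching discrete spectra with multiplicity together with matching Eisenstein series/scattering data), the Weyl asymptotics of the Laplacian on functions force $\Vol(N)=\Vol(M)=2V_3$, where $V_3$ is the volume of the regular ideal tetrahedron. The scattering/Eisenstein-series data furthermore determine the number of cusps (the rank of the scattering matrix at $s=1$), so $N$ must be a $1$-cusped orientable finite-volume hyperbolic $3$-manifold of volume exactly $2V_3$.

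Second, I would invoke the theorem of Cao--Meyerhoff identifying the minimum-volume orientable cusped hyperbolic $3$-manifolds: the only such manifolds of volume $2V_3$ are the figure-eight knot complement $M$ and its sister $M'$ (the census manifold \texttt{m003}). Hence $N$ is homeomorphic to either $M$ or $M'$, and the theorem reduces to showing $M$ and $M'$ are not isospectral.

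Third, to separate $M$ from $M'$ spectrally, I would use the Selberg trace formula for cusped hyperbolic $3$-manifolds: the combination of the discrete spectrum and the scattering determinant determines the geometric side of the trace formula, and hence determines the complex length spectrum. It therefore suffices to exhibit any discrepancy in the complex length spectra of $M$ and $M'$. This is a direct finite computation with \texttt{SnapPy}: one compares the initial segment of short geodesics on both manifolds and observes that, although $M$ and $M'$ are arithmetic and commensurable, the multiplicities (and indeed the very list) of short complex lengths differ — consistent with the topological fact that $H_1(M;\BZ)=\BZ$ while $H_1(M';\BZ)$ has nontrivial torsion.

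The main obstacle is the third step, distinguishing $M$ from its sister. The Sunada-type constructions appearing elsewhere in the paper produce genuinely isospectral commensurable pairs, so there is no a priori spectral obstruction between two commensurable minimum-volume manifolds; one truly needs a manifold-specific verification. Once the trace-formula passage from spectral to geometric data is in hand, however, the computation of the short complex length spectra of $M$ and $M'$ is a small finite check and completes the proof.
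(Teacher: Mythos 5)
Your proposal follows essentially the same route as the paper: the Weyl law gives $\Vol(N)=\Vol(M)$, Cao--Meyerhoff reduces $N$ to the figure-eight complement or its sister, and the trace formula plus a comparison of shortest geodesics eliminates the sister (the paper verifies the shortest-geodesic discrepancy both with \texttt{Snap} and by a theoretical conjugacy-class argument in $\PSL(2,\BZ[\omega])$). The one caution is that the paper extracts from the trace formula only the \emph{set} of real lengths --- the twisted multiplicities $m_\Gamma(\ell)$ lump together all loxodromic classes of a given real length, so the full complex length spectrum with multiplicities is not obviously recovered as you assert --- but the set of lengths already suffices, since the shortest geodesics of the two candidate manifolds have different lengths.
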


Indeed we also show that the first ten $1$-cusped orientable finite volume
hyperbolic 3-manifolds are determined by their spectral data (see Definition
2.1).


\section{What does isospectral mean for cusped manifolds?}

As remarked upon in the Introduction, since we are in the setting of
cusped orientable hyperbolic 3-manifolds, some clarification about the
statement of Theorem \ref{main} is required, and in this section we
explain what we mean by {\em isospectral $1$-cusped hyperbolic 3-manifolds}.  
Throughout this paper we will
restrict ourselves to only discussing the spectrum for $1$-cusped
hyperbolic 3-manifolds. This simplifies things, but much of what is
described in this section holds more generally, 
and similar statements can be made in the
presence of multiple cusps. We refer the reader to Chapters 4 and 6 of
\cite{EGM} or \cite{CoS} for a detailed discussion of the spectral theory
of cusped hyperbolic 3-manifolds.

\subsection{The spectrum of the Laplacian in the cusped setting} 
\label{sub.spec.lap}

Let $M={\bf H}^3/\Gamma$ be a 1-cusped,
orientable finite volume hyperbolic 3-manifold.
The spectrum of the Laplacian on the space $L^2(M)$
consists of {\em a discrete spectrum} (i.e. a collection of eigenvalues 
$0\leq \lambda_1\leq \lambda_2 \ldots$ where
each $\lambda_j$ has finite multiplicity), together with {\em a continuous
spectrum} (a copy of the interval $[1,\infty)$). 
Moreover, the discrete spectrum
consists of finitely many eigenvalues in $[0,1)$, together
with those eigenvalues embedded in the continuous spectrum.
However, unlike the closed setting,
in general, it is unknown as to whether the discrete spectrum is infinite
(we address this point in Section 2.3). 

The eigenfunctions
associated to eigenvalues in the discrete spectrum form an orthonormal system
and the closed subspace of $L^2(M)$ that they generate
is denoted by $L^2_{disc}(M)$. The orthogonal complement of
$L^2_{disc}(M)$ in $L^2(M)$ is denoted by $L^2_{cont}(M)$ and ``corresponds'' 
(in a way
that we need not make precise here) to the continuous spectrum (see
\cite{EGM} Chapters 4 and 6).

In the closed case, the Weyl law provides a way to prove that the
discrete spectrum is infinite (see \cite{EGM} Chapter
5). The precise analogue of this in the cusped setting is not
available, and this necessitates understanding a contribution from an 
Eisenstein series associated to the cusp of $M$.  To describe this further, 
conjugate $\Gamma$ so that a maximal peripheral subgroup
$P<\Gamma$ fixes infinity.  Fixing co-ordinates on 
${\bf H}^3 = \{w=(z,y): z\in {\bf C}, y\in {\bf R}_+\}$, we define 
the {\em Eisenstein series associated to the cusp at infinity} by:

$$E(w,s) = \sum\limits_{\gamma\in P\backslash\Gamma} y(\gamma w)^s,$$

\noindent where $y(p)$ denotes the $y$ co-ordinate of the point
$p \in{\bf H}^3$.
Now since $E(w,s)$ is $P$-invariant, an analysis of the  Fourier expansion 
at $\infty$ reveals a constant term of the form:

$$
y(w)^s + \phi(s)y(w)^{2-s},
$$

\noindent where $\phi(s)$ is the so-called {\em scattering
  function}. This is defined for $\Re(s)>2$ and has a meromorphic
extension to the complex plane.  The poles of $\phi(s)$ are also the
poles of the Eisenstein series and all lie in the half-plane
$\Re(s)<1$, except for at most finitely many in the interval $(1,2]$.
Moreover, if $t\in (1,2)$ is a pole, the residue $\psi = \Res_{s=t}E(w,s)$ 
is an eigenfunction with eigenvalue $t(2-t)$ (see \cite{CoS}). In
addition, if there is a pole at $s=2$, the residue will be an
eigenfunction with eigenvalue $0$ (\cite{CoS}). This subset of the
discrete spectrum arising from residues of poles of the Eisenstein
series is called the {\em residual spectrum}. If $t$ is a pole 
of $E(w,s)$ (equivalently $\phi(s)$) we define the {\em multiplictity}
at $t$ to be the order of the pole at $t$, plus the dimension of the eigenspace
in the case when $t$ contributes to the residual spectrum as described above.

The following definition is, in part, motivated by what spectral
information is required to determine the geometry in the cusped setting; e.g. 
the role of the scattering function and its poles is natural 
in the analogue of the Weyl law for cusped manifolds(see \cite{EGM} Theorem
6.5.4).  

\begin{definition}
\label{isospectral}
Let $M_1$ and $M_2$ be 1-cusped orientable hyperbolic 3-manifolds of finite
volume with associated scattering functions $\phi_1(s)$ and $\phi_2(s)$. 
Assume that the discrete spectrum of $M_1$ is infinite.
Say that $M_1$ and $M_2$ are isospectral if:

\begin{itemize}
\item $M_1$ and $M_2$ have the same discrete spectrum, counting 
multiplicities;
\item $\phi_1(s)$ and $\phi_2(s)$ have the same set of poles and 
multiplicities.
\end{itemize}
\end{definition}

\begin{remark} (1) For a 1-cusped orientable finite volume
hyperbolic 3-manifold $M$, its discrete spectrum, counting 
multiplicities, together with the set of poles and 
multiplicities of the scattering function will be referred to as its
spectral data.\\[\baselineskip]
(2)  For a multi-cusped orientable finite volume
hyperbolic 3-manifold $M$, the scattering function is a matrix (the 
scattering matrix), and in this 
case one takes the determinant of the scattering
matrix to obtain a function $\tau_M(s)$ that plays the role of $\phi(s)$
above.\\[\baselineskip]
(3) Continuing with the discussion of the role of the scattering
function in determining the geometry from spectral data,
it is shown in \cite{Mu} that an analogue
of Huber and McKean's results for compact surface holds. 
Namely, the spectral data in Definition 2.1 (in
the context of a non-compact
hyperbolic surface of finite area),
determines the length spectrum of the surface and vice versa (see Section
8 for a discussion of this for 1-cusped hyperbolic 3-manifolds). Moroever,
there are only finitely many hyperbolic surfaces with 
the given spectral data.\\[\baselineskip]
(4) In general the scattering determinant is hard to compute explicitly.
However, for arithmetic manifolds (and orbifolds) the scattering
determinant is related to Dedekind zeta functions of number fields.
For example, for $\PSL(2,{\Bbb Z})$ the poles of the scattering function
are related to the zeroes of $\zeta(s)$ (see \cite{Sa1}),
whilst for the Bianchi orbifolds with one cusp,
the scattering function is expressed in terms of the zeta function 
$\zeta_K(s)$ attached to the quadratic imaginary number field $K$
(see \cite{ES} or \cite[Chpt.8.3]{EGM}).
\end{remark}

\subsection{Manifolds with the same Eisenstein series}

The following lemma will be useful in our construction.  We fix
some notation. Let $M={\bf H}^3/\Gamma$ be a $1$-cusped orientable 
finite volume hyperbolic
3-manifold with finite covers $M_i={\bf H}^3/\Gamma_i$ ($i=1,2$) both
with one cusp and of same covering degree, $n$ say.  
Conjugate $\Gamma$ so that a maximal peripheral subgroup 
$P<\Gamma$ fixes $\infty$, and let $P_i=\Gamma_i\cap P$.
Denote the Eisenstein series associated to $M$, $M_1$ and
$M_2$ constructed in Section \ref{sub.spec.lap} by $E(w,s)$, $E_1(w,s)$ 
and $E_2(w,s)$ respectively.

\begin{lemma}
\label{sameeis}
Let $M$, $M_1$ and $M_2$ be as above. 
Then $E_1(w,s) = E_2(w,s)$. In particular $M_1$ and $M_2$ have the same 
scattering function.
\end{lemma}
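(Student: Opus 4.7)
The plan is to exhibit a natural bijection between the index sets $P_i\backslash\Ga_i$ and $P\backslash\Ga$ that preserves the summands $y(\ga w)^s$ termwise, which immediately yields $E_1(w,s)=E(w,s)=E_2(w,s)$ in the region of absolute convergence $\Re(s)>2$, and hence everywhere by meromorphic continuation.

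The first step is to extract from the one-cusp hypothesis on $M_i$ the algebraic statement $\Ga = P\Ga_i$. This uses the standard bijection between cusps of $M_i$ lying over the (unique) cusp of $M$ and the double cosets $P\backslash\Ga/\Ga_i$: if there is a single cusp, there is a single double coset, so $\Ga=P\Ga_i$. (As a sanity check, this gives $n=[\Ga:\Ga_i]=[P\Ga_i:\Ga_i]=[P:P\cap\Ga_i]=[P:P_i]$, consistent with the cusp of $M_i$ covering that of $M$ with degree $n$.)

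Given $\Ga=P\Ga_i$, I would define $\Phi_i\colon P_i\backslash\Ga_i \to P\backslash\Ga$ by $P_i\ga\mapsto P\ga$. It is well-defined and injective since $P\ga=P\ga'$ with $\ga,\ga'\in\Ga_i$ forces $\ga'\ga^{-1}\in P\cap\Ga_i=P_i$; it is surjective because any $\eta\in\Ga$ can be written $\eta=p\ga$ with $p\in P$ and $\ga\in\Ga_i$, whence $P\eta=P\ga$. Next, because every element of $P$ fixes $\infty$ and therefore acts on $\mathbf{H}^3$ by Euclidean isometries of horospheres $\{y=\text{const}\}$, the quantity $y(\ga w)$ depends on $\ga$ only modulo left multiplication by $P$, and a fortiori only modulo $P_i$. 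Thus $\Phi_i$ matches the summand at $P_i\ga$ in the definition of $E_i(w,s)$ with the summand at $P\ga$ in the definition of $E(w,s)$, giving $E_i(w,s)=E(w,s)$ termwise on $\Re(s)>2$.

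Combining the two equalities gives $E_1(w,s)=E_2(w,s)$, and the last sentence of the lemma is immediate: the scattering function $\phi_i(s)$ is read off the constant term $y^s+\phi_i(s)y^{2-s}$ of the Fourier expansion of $E_i$ at $\infty$, so equal Eisenstein series produce equal scattering functions. The only nontrivial ingredient is the double-coset identification in the first step; once $\Ga=P\Ga_i$ is in hand, the remainder is bookkeeping and has no analytic obstacle.
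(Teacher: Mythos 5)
Your proof is correct and follows essentially the same route as the paper: the one-cusp hypothesis yields $\Gamma = P\Gamma_i$ (equivalently, coset representatives for $\Gamma_i$ in $\Gamma$ may be chosen inside $P$), and the $P$-invariance of the height $y(\cdot)$ then identifies the Eisenstein sums term by term. The only divergence is bookkeeping: your bijection $P_i\backslash\Gamma_i \to P\backslash\Gamma$ gives $E_i(w,s) = E(w,s)$ on the nose, whereas the paper partitions $E(w,s)$ into $n$ blocks and writes $nE_i(w,s) = E(w,s)$; your normalization is the correct one, since each left $P$-coset of $\Gamma$ meets $\Gamma_i$ in exactly one left $P_i$-coset (each of the paper's $n$ blocks already reproduces the full coset space $P\backslash\Gamma$), though the conclusion $E_1 = E_2$ and the equality of scattering functions are unaffected.
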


\begin{proof} 
We begin the proof with a preliminary remark.
Suppose that $N={\bf H}^3/G$ is $1$-cusped and
is a finite covering of $M$.
We claim that a set of distinct coset 
representatives for $G$ in $\Gamma$ can be chosen from elements of $P$.
Briefly, since the preimage of the cusp of $M$
is connected (i.e. is the single cusp of $N$), we have must 
equality of indices $[\Gamma:G] = [P:P\cap G]$. Thus a 
collection of coset representatives for $P\cap G$ in $P$ also works 
as coset representatives $G$ in $\Gamma$.

Given this, let
$S = \{\delta_1, \ldots ,\delta_n\}\subset P$ be a set of distinct
(left) coset representatives for $\Gamma_1$ in $\Gamma$ and
$S' = \{\delta_1', \ldots ,\delta_n'\}\subset P$ be a set of distinct
coset representatives for $\Gamma_2$ in $\Gamma$.

Now any term in $E(w,s)$ has the form $y(\gamma w)^s$ for 
$\gamma \in \Gamma$ not fixing $\infty$.  Using the above decomposition
of $\Gamma$ as a union of cosets of both $\Gamma_1$ and $\Gamma_2$,
there exists 
$\gamma_1\in\Gamma_1$,  $\gamma_2\in \Gamma_2$ and $\delta_j\in S$,
$\delta_k'\in S'$ so that:

$$
\delta_j\gamma_1 = \gamma = \delta_k'\gamma_2.
$$

Since $\delta_j, \delta_k' \in P$ and $\gamma\notin
P$, it follows that $\gamma_1\notin P_1$ and $\gamma_2\notin P_2$
(otherwise $\gamma\in P$, contrary to the definition of the Eisenstein
series).  Using the coset decomposition of $\Gamma$, it follows
that $E(w,s)$ can be decomposed as a sum of terms of the form:

\begin{equation}
\tag{$\ast$}
\label{ast}
\sum\limits_{g\in P_1\backslash\Gamma_1} y(\delta_jgw)^s \qquad \text{and}
\qquad \sum\limits_{h\in P_2\backslash\Gamma_2} y(\delta_k'hw)^s.
\end{equation}

Since $\delta_j,\delta_k'\in P$, they act by translation on
${\bf H}^3$, and in particular the $y$-cordinate is unchanged by this; i.e.
$y(\delta_jgw) = y(gw)$ and 
$y(\delta_k'hw) = y(hw)$. Hence the terms in \eqref{ast}
above reduce to
$$\sum\limits_{g\in P_1\backslash\Gamma_1} y(gw)^s~\hbox{and}~
\sum\limits_{h\in P_2\backslash\Gamma_2} y(hw)^s.$$

So, putting all of this together, we have the following:

$$
nE_1(w,s) = E(w,s) = nE_2(w,s),
$$
which proves the lemma.
\end{proof}

\subsection{Ensuring the discrete spectrum is infinite} 

In this section we address the issue of ensuring that
the discrete spectrum is infinite. In particular we state
a result that 
can be proved using the methods of \cite{Ven1} (see
also the comments in \cite{EGM} at the end of Chapter 6.5). To state
the result we need to recall the following.

A fundamental dichotomy of Margulis for a finite volume hyperbolic 
manifold $M={\bf H}^3/\Gamma$, 
is whether $M$ is arithmetic or not. In the commensurability class of a
non-arithmetic manifold, there is a unique minimal element in the
commensurability class. This minimal element arises as 
${\bf H}^3/\Comm(\Gamma)$, where

$$
\Comm(\Gamma) = \{g\in\Isom({\bf H}^3):g\Gamma g^{-1}~\hbox{is commensurable
with}~ \Gamma\}
$$ 
is {\em the commensurator} of $\Gamma$.

The following can be proved following the methods in \cite{Ven1}. 
Note that in the statement of \cite{Ven1} Theorem 2, 
a certain matrix determinant is assumed to be non-vanishing. 
In our setting, since the 
manifold has one cusp, this matrix coincides with a function 
and can be shown to not be identically zero.

\begin{theorem}
\label{venkov}
Let $M={\bf H}^3/\Gamma$ be an orientable finite volume $1$-cusped 
non-arithmetic hyperbolic 3-manifold that is not the minimal element in its
commensurability class (i.e. $\Gamma\neq \Comm(\Gamma)$). Then the
discrete spectrum of $M$ is infinite.
\end{theorem}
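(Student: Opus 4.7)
The plan is to adapt Venkov's Theorem~2 from \cite{Ven1} to the one-cusped setting. Venkov uses the Selberg trace formula to produce a Weyl-type lower bound for the counting function of the discrete eigenvalues, valid whenever a specific determinant built from the scattering data fails to vanish identically. When this hypothesis holds, the Weyl term (of order $\vol(M)T^3$) dominates the winding-number contribution of the scattering function, so the discrete spectrum must be infinite. The task therefore splits into two parts: (i) specialize Venkov's matrix to our one-cusped situation, and (ii) verify non-vanishing of the resulting function using that $\Gamma$ is non-arithmetic with $\Gamma\neq \Comm(\Gamma)$.

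Step (i) is essentially bookkeeping. Venkov's matrix is indexed by the cusps, so with a single cusp it collapses to a $1\times 1$ block whose ``determinant'' is just a scalar meromorphic function $F(s)$ of the spectral parameter. For step (ii), the hypothesis that $\Gamma$ is non-arithmetic lets Margulis' commensurator theorem apply: $\Comm(\Gamma)$ is itself a discrete, finite co-volume subgroup of $\Isom({\bf H}^3)$, and it contains $\Gamma$ with finite index $n=[\Comm(\Gamma):\Gamma]>1$. Thus $M$ is a nontrivial degree-$n$ cover of the minimal orbifold $M_0={\bf H}^3/\Comm(\Gamma)$. A coset decomposition over the peripheral subgroup, in the spirit of the proof of Lemma~\ref{sameeis}, writes the Eisenstein series of $M$ in terms of that of $M_0$, and consequently expresses $F(s)$ as a concrete algebraic combination of $\phi_M(s)$ and $\phi_{M_0}(s)$. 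Identical vanishing of $F$ would force a functional identity between these two scattering functions incompatible with their leading asymptotic behavior as $\Re(s)\to\infty$, where the dominant coefficients are controlled by $\vol(M)$ and $\vol(M_0)=\vol(M)/n$ respectively, with $n>1$.

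The main obstacle is executing step (ii) cleanly: one must pin down the precise algebraic relation linking $\phi_M$, $\phi_{M_0}$, and $F$ so that non-vanishing reduces to a transparent statement about covering degrees and volumes. Once that identity is in hand, the strict inequality $n>1$ directly rules out $F\equiv 0$, and Venkov's Theorem~2 then delivers the desired infinitude of the discrete spectrum of $M$.
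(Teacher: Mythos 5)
Your overall strategy is the same as the paper's: the paper proves Theorem \ref{venkov} by invoking Theorem 2 of \cite{Ven1}, observing that in the one-cusped case the matrix whose determinant is required to be non-vanishing degenerates to a single meromorphic function, and asserting (without further detail) that this function is not identically zero. So your step (i) and the general architecture match what is written. The hypothesis $\Gamma\neq\Comm(\Gamma)$ together with non-arithmeticity is indeed used exactly as you say, via Margulis, to produce a proper finite-index overgroup; that part is correct.

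The difficulty is your step (ii), which is where the only real content lies and which you yourself flag as the ``main obstacle.'' Two concrete problems. First, you never identify the function $F(s)$ whose non-vanishing must be checked; asserting that it is ``a concrete algebraic combination of $\phi_M(s)$ and $\phi_{M_0}(s)$'' obtainable from a coset decomposition in the spirit of Lemma \ref{sameeis} is not something you can take for granted --- in Venkov's method the overgroup enters through the decomposition of $L^2(M)$ into isotypic components of the permutation representation of $\Comm(\Gamma)$ on the cosets of $\Gamma$, and the relevant determinant is built from the scattering data of the \emph{twisted} Eisenstein series on the quotient orbifold, not from $\phi_M$ and $\phi_{M_0}$ alone. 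Second, the asymptotic argument you propose to rule out $F\equiv 0$ rests on a false premise: the behaviour of a scattering function as $\Re(s)\to\infty$ is not governed by $\vol(M)$. Up to explicit Gamma factors, $\phi(s)$ is a Dirichlet series over the lower-left entries $c$ of a set of double-coset representatives, normalized by the Euclidean area of the cusp cross-section, so its leading asymptotics in a right half-plane are controlled by the cusp lattice and the minimal $|c|$; the volume appears only in the Weyl/Maass--Selberg asymptotics on the critical line. Consequently the inequality $\vol(M_0)=\vol(M)/n$ with $n>1$ does not, by the route you describe, produce the contradiction you need. As it stands the proposal reproduces the paper's citation but does not close the one step the paper leaves as an assertion.
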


We will make some further comments on the nature of the discrete spectrum 
(when it is known to be infinite) in Section 8.

\subsection{The complex length spectrum}
\label{sub.complex}

Let $M={\bf H}^3/\Gamma$ be an orientable finite volume hyperbolic
3-manifold. Given a loxodromic element $\gamma\in\Gamma$, the 
{\em complex translation length} of $\gamma$ is the complex number
$L_\gamma = \ell_\gamma+i\theta_\gamma$,
where $\ell_\gamma$ is the translation length of $\gamma$ and 
$\theta_\gamma\in[0,\pi)$ is the angle incurred in translating along the axis
of $\gamma$ by distance $\ell_\gamma$. 
The {\em complex length spectrum} of $M$ is defined to be the
collection of all complex translation lengths counted with multiplicities.

Given the discussion of the previous subsections, we now give a more
detailed statement of Theorem \ref{main}.

\begin{theorem}
\label{main_restate}
There are infinitely many pairs of finite volume orientable 1-cusped 
hyperbolic 3-manifolds that are isospectral but non-isometric.
In addition, our pairs have the following properties:
\begin{itemize}
\item cover a 1-cusped hyperbolic 3-manifold of the same degree,
\item have infinite discrete spectrum,
\item have the same Eisenstein series,
\item have the same complex length spectra.
\end{itemize}
\end{theorem}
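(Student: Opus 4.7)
The plan is to apply Sunada's construction in the $1$-cusped setting, choosing the base manifold and surjection so that both covers have exactly one cusp. Recall that a Gassmann triple $(G,H_1,H_2)$ --- two non-conjugate subgroups of a finite group $G$ satisfying $|H_1 \cap [g]| = |H_2 \cap [g]|$ for every conjugacy class $[g] \subset G$ --- together with a surjection $\phi : \Gamma \twoheadrightarrow G$, yields covers $M_i = {\bf H}^3/\phi^{-1}(H_i)$ of $M = {\bf H}^3/\Gamma$ whose discrete $L^2$-spectra (with multiplicities) and complex length spectra coincide. A natural target is $G = \PSL(2,\mathbb{F}_7) \cong \GL(3,\mathbb{F}_2)$, whose two conjugacy classes of index-$7$ subgroups (stabilizers of lines versus stabilizers of hyperplanes in $\mathbb{F}_2^3$) form the classical Gassmann pair.

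The extra input needed in the cusped setting is to force both covers to be $1$-cusped. Since the cusps of $M_i$ correspond to $\phi(P)$-orbits on the coset space $G/H_i$, where $P < \Gamma$ is a maximal parabolic subgroup, I would choose $\phi$ so that $\phi(P)$ acts transitively on both $G/H_1$ and $G/H_2$. Once this holds, each $M_i$ is a $1$-cusped cover of $M$ of the common degree $[G:H_i]$, and Lemma \ref{sameeis} gives equal Eisenstein series and hence equal scattering functions and residual spectra. Combined with Sunada's conclusion on the discrete spectrum, this yields isospectrality in the sense of Definition \ref{isospectral}, together with the first, third, and fourth bullets of Theorem \ref{main_restate}. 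To secure the infinite discrete spectrum, I would additionally require $M$ to be non-arithmetic and minimal in its commensurability class, i.e.\ $\Gamma = \Comm(\Gamma)$; then by Margulis each $M_i$ is non-arithmetic with $\Comm(\phi^{-1}(H_i)) = \Gamma \supsetneq \phi^{-1}(H_i)$, so Theorem \ref{venkov} applies.

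Non-isometry I would detect via a coarse invariant: the permutation representations $G \curvearrowright G/H_i$ are inequivalent, which typically forces $H_1(M_1;\mathbb{Z}) \not\cong H_1(M_2;\mathbb{Z})$, and in any concrete low-degree example this can be verified directly with \texttt{SnapPy}. For the \emph{infinitely many} clause, I would start from a non-arithmetic $2$-cusped hyperbolic $3$-manifold $W$ equipped with a surjection $\pi_1(W) \twoheadrightarrow G$ satisfying the transitivity condition on one cusp, and Dehn fill the other cusp along the infinitely many slopes lying in the kernel of the boundary restriction of $\phi$. Thurston's hyperbolic Dehn surgery theorem ensures all but finitely many such fillings produce $1$-cusped hyperbolic manifolds $M^{(n)}$ on which the full Sunada datum descends; the volumes $\vol(M^{(n)})$ are pairwise distinct and accumulate at $\vol(W)$, so the Sunada construction applied to each $M^{(n)}$ produces an infinite family of pairwise distinct isospectral pairs.

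The main obstacle is the simultaneous compatibility of the cusp-control step with the hypotheses of Theorem \ref{venkov}: one must locate an explicit base whose fundamental group surjects onto a Gassmann-carrying group with parabolic image transitive on \emph{both} coset spaces, while the base itself is non-arithmetic and equal to its commensurator. The double transitivity is a rigid combinatorial constraint tying the image of the peripheral meridian--longitude pair to the subgroup structure of $G$, and most naively chosen surjections fail on at least one of the two coset actions; engineering a base manifold (or $W$) where all these conditions hold at once is the technical crux, and is the step where one would expect the concrete census searches alluded to in the introduction to be decisive.
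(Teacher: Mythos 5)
Your overall strategy is the paper's: Sunada with the index-$7$ (and index-$11$) almost conjugate pair in $\PSL(2,7)$, cusp control via the peripheral image, non-arithmeticity plus $\Gamma=\Comm(\Gamma)$ for the base, Theorem \ref{venkov} for the infinite discrete spectrum, Lemma \ref{sameeis} for the Eisenstein series, and Dehn filling one cusp of a $2$-cusped base along slopes killed by the representation to get infinitely many examples. Two remarks on the details. First, your worry about arranging transitivity of $\phi(P)$ on \emph{both} coset spaces simultaneously is unfounded: since $H_1$ and $H_2$ are almost conjugate, the permutation characters of $G$ on $G/H_1$ and $G/H_2$ agree, so any subgroup of $G$ has the same number of orbits on both; the paper sidesteps the issue entirely by using a $p$-rep in Riley's sense, so that $\phi(P)$ is parabolic of order $p$, which is coprime to $|H_i|=24$ (resp.\ $60$) and therefore acts freely, hence transitively, on both $p$-element coset spaces.

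The genuine gap is your non-isometry argument. Sunada pairs always have the same rational homology, and in these examples they in fact have the same \emph{integral} homology: the paper's two index-$7$ covers of $S^3\setminus K11n116$ both have $H_1 \cong \BZ/2 + \BZ/110 + \BZ$. So "inequivalent permutation representations typically force $H_1(M_1;\BZ)\not\cong H_1(M_2;\BZ)$" is false here, and a \texttt{SnapPy} check cannot certify infinitely many pairs. The correct argument is already implicit in your hypotheses: if $g\in\Isom({\bf H}^3)$ conjugates $\Gamma_1$ to $\Gamma_2$ then $g\in\Comm(\Gamma)=\Gamma$, and projecting to $G$ exhibits $H_1$ and $H_2$ as conjugate, contradicting almost conjugacy. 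For this (and for Theorem \ref{venkov}) to apply to the Dehn-filled manifolds $M^{(n)}$, you also need that all but finitely many fillings of the non-arithmetic minimal base remain non-arithmetic and minimal in their commensurability classes; the paper supplies this via Theorem 3.2 of \cite{Reid}, and without it neither the non-isometry nor the infinite-discrete-spectrum claims go through for the infinite family.
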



\section{The Sunada construction in the $1$-cusped setting}
\label{sec.sunada}

Let $G$ be a finite
group and $H_1$ and $H_2$ subgroups of $G$. 
We say that $H_1$ and $H_2$ are {\em almost conjugate} 
if they are not conjugate in $G$ but
for every conjugacy class $C \subset G$ we have:

$$
|C\cap H_1| = |C\cap H_2|.
$$

If the above condition is satisfied, we call $(G,H_1,H_2)$ a 
{\em Sunada triple}, and $(H_1,H_2)$ an almost conjugate pair in $G$. 
We prove the following using 
Sunada's method \cite{Su} (cf. \cite{Ber, Br, Br1, PR}).

\begin{theorem}
\label{sunada}
Let $M={\bf H}^3/\Gamma$ be a 1-cusped finite volume orientable 
hyperbolic 3-manifold
that is non-arithmetic and 
the minimal element in its commensurability class.  Let $G$ be
a finite group, $(H_1,H_2)$ an almost conjugate pair in $G$, and
assume that $\Gamma$ admits a homomorphism onto $G$.  Assume that 
the finite covers $M_1$ and $M_2$ associated to the pullback subgroups of
$H_1$ and $H_2$ have $1$ cusp. 
Then $M_1$ and $M_2$ are isospectral, have the same
complex length spectra and are non-isometric.
\end{theorem}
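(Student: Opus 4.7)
The proof has four parts: (i) equality of scattering functions, (ii) isospectrality of the discrete spectrum together with equality of complex length spectra, (iii) non-isometry, and (iv) infiniteness of the discrete spectrum. Parts (i) and (iv) are essentially already available, and the meat is a Sunada-style trace formula comparison plus a rigidity argument.

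Part (i) is immediate. Since $M_1$ and $M_2$ are covers of $M$ of the same degree and each has a single cusp, Lemma \ref{sameeis} gives $E_1(w,s) = E_2(w,s)$; in particular the scattering functions $\phi_1(s)$ and $\phi_2(s)$ coincide as meromorphic functions, so their poles and multiplicities agree.

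For part (ii) the plan is to apply the Selberg trace formula on $M_1$ and $M_2$ and compare the geometric sides term by term, following Sunada's original scheme. Let $\pi : \Gamma \twoheadrightarrow G$ be the given surjection and $\Gamma_i = \pi^{-1}(H_i)$. The loxodromic part of the geometric side of the trace formula for $M_i$ is a sum over $\Gamma_i$-conjugacy classes of loxodromic elements; each $\Gamma$-conjugacy class $[\gamma]$ contributes with a multiplicity equal to the number of conjugates of $\pi(\gamma)$ inside $H_i$. By the almost-conjugacy hypothesis, $|C \cap H_1| = |C \cap H_2|$ for every conjugacy class $C$ of $G$, so the loxodromic contributions to the two trace formulas match term by term. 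The parabolic contribution depends only on counting $\Gamma_i$-conjugacy classes of parabolic elements, which is the same because both covers have a single cusp of the same degree; the Eisenstein/scattering contribution matches by part (i). Consequently the spectral sides are equal: $M_1$ and $M_2$ have the same discrete spectrum with multiplicities, and (by reading off lengths and angles from loxodromic conjugacy classes) the same complex length spectrum.

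For part (iii) I would use Mostow rigidity together with Margulis' theorem on commensurators. Any isometry $M_1 \to M_2$ lifts to an element $g \in \Isom(\mathbf{H}^3)$ with $g\Gamma_1 g^{-1} = \Gamma_2$, so $g \in \Comm(\Gamma)$. Since $M$ is non-arithmetic and minimal in its commensurability class, Margulis' theorem gives $\Comm(\Gamma) = \Gamma$, hence $g \in \Gamma$ and $\Gamma_1, \Gamma_2$ are $\Gamma$-conjugate. Pushing forward under $\pi$ makes $H_1$ and $H_2$ conjugate in $G$, contradicting almost-conjugacy. Part (iv) is then automatic from Theorem \ref{venkov}: non-arithmeticity is commensurability-invariant so $M_i$ is non-arithmetic, and $M_i$ properly covers the minimal element $M$, so $\Gamma_i \neq \Comm(\Gamma_i)$, giving an infinite discrete spectrum.

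The main technical obstacle is the trace formula bookkeeping in part (ii): unlike the compact case one must also control the parabolic and continuous contributions, and it is Lemma \ref{sameeis} plus the one-cusp hypothesis on each $M_i$ that allows those terms to cancel cleanly against each other. Once that is in hand, the Sunada counting argument runs exactly as in the closed setting.
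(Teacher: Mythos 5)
Your parts (i), (iii) and (iv) coincide with the paper's argument: equality of scattering functions via Lemma \ref{sameeis}, non-isometry via Mostow rigidity plus $\Comm(\Gamma)=\Gamma$ pushed forward to a conjugacy of $(H_1,H_2)$, and infinitude of the discrete spectrum via Theorem \ref{venkov}. Where you genuinely diverge is part (ii). The paper does \emph{not} compare trace formulas to equate the discrete spectra; it uses a transplantation argument: letting $M_0$ be the cover corresponding to $\mathrm{ker}(\Gamma\twoheadrightarrow G)$, one has $L^2_{disc}(M_i)=L^2_{disc}(M_0)^{H_i}$, and Lemma \ref{PRlemma} (Lemma 1 of \cite{PR}) produces an isomorphism $V^{H_1}\to V^{H_2}$ commuting with the Laplacian, giving equality of discrete spectra with multiplicities directly and with no analytic bookkeeping; the complex length spectra are then handled separately by Sunada's counting. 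Your route --- matching the geometric sides of the two trace formulas and reading off the spectral sides --- is viable (indeed the paper runs essentially this machinery in the reverse direction in Proposition \ref{dubiproof}), but it is the more delicate option, and two points in your sketch need tightening. First, the multiplicity with which a $\Gamma$-class $[\gamma]$ contributes to the $M_i$ side is the number of fixed points of $\gamma$ on $H_i\backslash G$, namely $\frac{|G|}{|H_i|\,|[\pi(\gamma)]|}\,|[\pi(\gamma)]\cap H_i|$, not literally $|[\pi(\gamma)]\cap H_i|$; since $|H_1|=|H_2|$ this does not change the conclusion, but the weights in the trace formula also involve the primitive length $\ell_{\gamma_0}$ taken \emph{in $\Gamma_i$}, and an element primitive in $\Gamma_i$ need not be primitive in $\Gamma$, so the term-by-term matching has to be organized by $\Gamma$-classes of all (not just primitive) loxodromics. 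Second, the parabolic constants $a_{\Gamma_i}$, $b_{\Gamma_i}$ depend on the peripheral lattices $P_i=\Gamma_i\cap P$, which are a priori different index-$n$ sublattices of $P$; "one cusp of the same degree" alone does not make these terms cancel --- you need either the Sunada count applied to parabolic classes or the identity $E_1=E_2$ from Lemma \ref{sameeis} to control them. The trade-off is clear: your approach yields the complex length spectrum and the discrete spectrum in one stroke but inherits all the regularization issues of the cusped trace formula, while the paper's transplantation argument isolates the discrete spectrum cleanly and cheaply.
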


\begin{proof} First, note that the manifolds $M_1$ and $M_2$
cannot be isometric, since
if there exists $g\in \Isom({\bf H}^3)$ with
$g\Gamma_1 g^{-1}=\Gamma_2$, then this implies that $g\in\Comm(\Gamma)$.
However, by assumption, $\Comm(\Gamma)=\Gamma$, and so
projecting to the finite group
$G$, we effect a conjugacy of the almost conjugate
pair $(H_1,H_2)$, a contradiction.

To prove isospectrality, there are two things that
need to be established; that both $M_1$ and $M_2$ have the same
infinite discrete spectrum with multiplicities, 
and that their scattering functions have the same poles with multiplicities.
Since $M_1$ and $M_2$ are $1$-cusped, and 
$[\Gamma:\Gamma_1]=[\Gamma:\Gamma_2]$, the latter follows
immediately from the fact that their Eisenstein series are the same
by Lemma \ref{sameeis}.

Regarding the former statement, Theorem \ref{venkov}
shows that the discrete spectrum is infinite for both $M_1$ and $M_2$, and
we deal with remaining statement about the discrete spectra
in a standard way following \cite{Su}. For completeness we sketch a
proof of this.

Now it can be shown that to
prove that $M_1$ and $M_2$ have the same discrete spectra with multiplicities,
it suffices to show that $L^2_{disc}(M_1)\cong L^2_{disc}(M_2)$.  
To see this we find it convenient to follow
\cite{PR} and we refer the reader to that paper for details.  We need
a lemma from \cite{PR} and this requires some notation.
Let $G$ be
a finite group, and $V$ is a $G$-module. Denote by $V^G$ the 
submodule of $V$ invariant under the $G$-action. The following is
Lemma 1 of \cite{PR}:

\begin{lemma}
\label{PRlemma}
Suppose $G$ is a finite group, $(H_1,H_2)$ an almost conjugate pair in $G$
and suppose that $G$ acts on the complex vector space $V$. Then there is an 
isomorphism $\iota: V^{H_1}\rightarrow V^{H_2}$, commuting with the action of 
any endomorphism $\Delta$ of $V$ for which the following diagram commutes.
$$
\begin{matrix}
V^{H_1}& \mathop{\longrightarrow}\limits^{\iota}&V^{H_2}\\
\Delta \Big\downarrow\quad &&\Big\downarrow  \Delta\\
\noalign{\vskip6pt}
V^{H_1}&\mathop{\longrightarrow}\limits^{\iota} &V^{H_2}
\end{matrix}
$$
\end{lemma}

Now let $M_0$ be the cover of $M$ corresponding to the kernel of the 
homomorphism to $G$. 
Taking $V$ to be $L^2_{disc}(M_0)$ in Lemma \ref{PRlemma}, $\Delta$ to be the 
Laplacian, and noting that
for $i=1,2$, $L^2_{disc}(M_i) = L^2_{disc}(M_0)^{H_i}$, 
it follows that $L^2_{disc}(M_1) \cong 
L^2_{disc}(M_2)$.

The proof that the manifolds have the same
complex length spectra follows that
given in \cite{Su}. 
\end{proof}

\begin{remark} 
As noted above, the method of Sunada \cite{Su} 
also produces pairs of finite volume hyperbolic 3-manifolds 
with the same complex length spectrum.  More generally, in
the case of closed hyperbolic 3-manifolds, the complex length spectrum 
is known to determine the spectrum of the Laplacian, see 
\cite[Thm.1.1]{Salvai}. This also holds for cusped hyperbolic manifolds, 
as can be seen from \cite[Thm.2]{Kelmer} for example.
\end{remark}

\begin{example}
\label{ex.triple}
For $p$ a prime, we denote by ${\bf F}_p$
the finite field of $p$ elements, and denote by $\PSL(2,p)$ the finite
group $\PSL(2,{\bf F}_p)$ (which of course are simple for $p>3$). It is
known that (see \cite{Gu} for example) for $p=7,11$
the groups $\PSL(2,p)$ contain almost conjugate pairs of subgroups of
index $7$ and $11$ respectively.
\end{example}

\begin{remark}
\label{rem.lessthan7}
In \cite{Gu}, it is shown that there are no examples of almost conjugate
(but not conjugate) subgroups of a finite group of index less than $7$.
Hence, $7$-fold covers are the smallest index covers for which the
Sunada construction can be performed. 
\end{remark}

Given the previous set up, we can 
now prove the following straightforward proposition that is the key
element in our construction. 
We require a preliminary definition.
Following Riley \cite{Ri} if $M={\bf H}^3/\Gamma$ is an orientable
finite volume 1-cusped hyperbolic 3-manifold, $P<\Gamma$ a fixed
maximal peripheral subgroup and $\rho :\pi_1(M)\rightarrow \PSL(2,p)$
a representation, then $\rho$ is called a {\em $p$-rep} if $\rho(P)$
is non-trivial and all non-trivial elements in $\rho(P)$
are parabolic elements of $\PSL(2,p)$. In which case, $\rho(P)$ is easily
seen to have order $p$. More generally if $M$ has more than 1-cusp we
call $\rho$ a $p$-rep of $\pi_1(M)$ if the image of all maximal
peripheral subgroups satisfies the same condition as above. 

\begin{proposition}
\label{key}
Let $M={\bf H}^3/\Gamma$ be an orientable non-arithmetic finite volume
1-cusped hyperbolic 3-manifold that is the 
minimal element in its commensurability class. Suppose that $\rho$ is
a $p$-rep of $\Gamma$ onto $G=\PSL(2,7)$ or $\PSL(2,11)$. Then $M$
has a pair of 1-cusped isospectral but non-isometric covers of degree
$7$ or $11$ respectively. In addition this pair of manifolds
have the same complex length spectra.
\end{proposition}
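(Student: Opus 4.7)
The plan is to reduce Proposition \ref{key} directly to Theorem \ref{sunada}. The almost conjugate pairs $(H_1,H_2)$ in $G=\PSL(2,7)$ and $G=\PSL(2,11)$ of index $7$ and $11$ respectively are supplied by Example \ref{ex.triple}, and by hypothesis $\rho$ is a surjection of $\Gamma$ onto $G$. Since $M$ is non-arithmetic and minimal in its commensurability class, the only hypothesis of Theorem \ref{sunada} that is not automatic is that the pullback covers $M_i=\mathbf{H}^3/\Gamma_i$, where $\Gamma_i=\rho^{-1}(H_i)$, each have exactly one cusp. So the body of the proof is the verification of this cusp count.

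To count the cusps of $M_i$, I will use the standard fact that, since $M$ has one cusp with fixed maximal peripheral subgroup $P$, the cusps of $M_i$ above this cusp are in bijection with the orbits of $\rho(P)$ on $G/H_i$ (equivalently, with double cosets $\rho(P)\backslash G/H_i$). Because $\rho$ is a $p$-rep, $\rho(P)$ is a cyclic group of order $p$, and $|G/H_i|=p$, so every orbit on $G/H_i$ has size $1$ or $p$. Thus $M_i$ is $1$-cusped precisely when $\rho(P)$ acts without fixed points on $G/H_i$, i.e.\ when no conjugate of $H_i$ contains $\rho(P)$.

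This last condition I expect to be easy, and it is where the numerical specifics of Example \ref{ex.triple} are used. For $G=\PSL(2,7)$ the almost conjugate subgroups have order $|G|/7=24$, and for $G=\PSL(2,11)$ they have order $|G|/11=60$; in each case $|H_i|$ is coprime to $p$, so by Lagrange $H_i$ (and every conjugate) contains no element of order $p$, hence cannot contain the order-$p$ group $\rho(P)$. Therefore the action is transitive, $M_i$ has a single cusp, and Theorem \ref{sunada} applies to give isospectral, non-isometric covers $M_1,M_2$ with the same complex length spectrum.

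The only place where something could go wrong is the cusp computation, and this is essentially a book-keeping check: once one identifies cusps of $M_i$ with $\rho(P)$-orbits on $G/H_i$ and observes the coprimality of $|\rho(P)|$ and $|H_i|$, the conclusion is immediate. No genuine obstacle appears, and the proposition is really a packaging of Theorem \ref{sunada} together with the combinatorial observation that a $p$-rep onto $\PSL(2,p)$ for $p=7,11$ automatically produces $1$-cusped covers corresponding to the almost conjugate subgroups.
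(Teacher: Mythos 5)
Your proposal is correct and follows essentially the same route as the paper: reduce to Theorem \ref{sunada} via the almost conjugate pairs of Example \ref{ex.triple}, and verify the one-cusp condition using the fact that the $p$-rep hypothesis forces $\rho(P)$ to have order $p$ while $|H_i|=|G|/p$ is coprime to $p$. The paper phrases the cusp count as the index computation $[P:P\cap\Gamma_i]=p$ (so a single cusp torus already accounts for the full covering degree), which is just the index-form of your statement that $\rho(P)$ acts transitively on $G/H_i$.
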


\begin{proof} 
Let $M_i={\bf H}^3/\Gamma_i$ ($i=1,2$), be the covers of
$M$ corresponding to the almost conjugate pair in Example \ref{ex.triple}
above in either of the cases $p=7,11$. 

Once we establish that $M_1$ and $M_2$
both have 1 cusp, that
$M_1$ and $M_2$ are isospectral and non-isometric follows
from Theorem \ref{sunada}.  This also shows that they have the same complex
length spectra.
We deal with the case of $p=7$, the case of $p=11$
is exactly the same.

Let $P$ denote a fixed maximal peripheral subgroup of $\Gamma$. 
For $i=1,2$, let $P_i=\Gamma_i\cap P$.
We claim that for $i=1,2$, $[P:P_i]=7$. This implies that the
covers $M_1$ and $M_2$ have one cusp, for then the degree
of the cover on a cusp torus of $M_i$ to the cusp of $M$ is $7$ to $1$, ie
$M_i$ can have only one cusp.

To prove the claim, since the epimorphism $\rho$ is a $p$-rep, the image of 
$P$ consists of parabolic elements of $\PSL(2,p)$, and as remarked upon above,
such subgroups have order $7$. On the
other hand, $H_1$ and $H_2$ have index $7$ in $\PSL(2,7)$, and since
$\PSL(2,7)$ has order $168$, the subgroups $H_1$ and $H_2$ both
have order $24$, which is co-prime to $7$.  
It follows from this that $\rho(P_i)=1$, so that $[P:P_i]=7$,
and this completes the proof.\end{proof}

We close this section by making the following observation. This will 
be helpful in computational aspects carried out in Section 7. 

Suppose that $M$ is a 1-cusped hyperbolic $3$-manifold and 
$\rho: \pi_1(M) \rightarrow \PSL(2,p)$ a representation. We will say that
$\rho$ is a $p$-good-rep if $\rho$ is an epimorphism and there exists
a pair of non-conjugate $p$-index subgroups $H_i$ of $\PSL(2,p)$ with
the following property: if $M_i$ is the cover of $M$ obtained from $H_i$, 
then $M_i$ is 1-cusped for $i=1,2$ and $M_1$ is not isometric to $M_2$. 
We are interested in $p=7,11$.

\begin{lemma}
\label{lem.explicitH}
Fix $p=7,11$. If $H_1$ and $H_2$ are non-conjugate index $p$ subgroups of 
$\PSL(2,p)$, then $(H_1,H_2)$ is a Sunada pair in $\PSL(2,p)$. 
\end{lemma}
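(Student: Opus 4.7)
The plan is to verify the almost-conjugacy condition directly: that $|C \cap H_1| = |C \cap H_2|$ for every conjugacy class $C$ of $G = \PSL(2,p)$. Combined with the given non-conjugacy, this is exactly what is needed for $(H_1,H_2)$ to be a Sunada pair.

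First, by Dickson's classification of subgroups of $\PSL(2,p)$, the index-$p$ subgroups are exactly the copies of $S_4$ (when $p=7$) or of $A_5$ (when $p=11$). In particular $H_1 \cong H_2$, so the two subgroups contain the same number of elements of each order; this is the universal input that makes all the class-intersection counts depend only on the isomorphism type of $H_i$.

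Next I will enumerate the conjugacy classes of $G$ and handle them in two batches. The easy classes are those whose element order is uniquely represented in $G$ together with those whose order does not divide $|H_i|$; for these, $|C \cap H_i|$ is either zero or equals the number of elements of the given order in $H_i$, which depends only on the isomorphism type. For $p = 7$ this disposes of every class: orders $1,2,3,4$ each occur in a unique $G$-class, and the two $G$-classes of order-$7$ elements meet the order-$24$ subgroups $H_i$ trivially. For $p = 11$ the same remark handles orders $1, 2, 3, 6, 11$ but leaves the two $G$-classes $5_A, 5_B$ of order-$5$ elements to be dealt with separately.

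The substantive case, and the part I expect to be the main obstacle, is the order-$5$ check for $p = 11$. Here the key observation is that the squaring map $g \mapsto g^2$ interchanges $5_A$ and $5_B$ in $\PSL(2,11)$; this is visible from the unordered eigenvalue pairs $\{\mu, \mu^{-1}\}$ and $\{\mu^2, \mu^{-2}\}$ for $\mu \in {\bf F}_{11}$ a primitive fifth root of unity, which represent distinct $G$-classes. Squaring also interchanges the two $A_5$-conjugacy classes of $5$-cycles, since $(12345)$ and $(13524) = (12345)^2$ lie in different $A_5$-classes. Because each $A_5$-class of $5$-cycles has $12$ elements and is contained in a single $G$-class (as $A_5 \subset G$), the two $A_5$-classes must map into the two distinct $G$-classes $5_A, 5_B$, yielding $|H_i \cap 5_A| = |H_i \cap 5_B| = 12$ for either choice of $i$. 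This squaring compatibility is the one place where something genuine has to be checked; once it is in hand, no appeal to the character table of $\PSL(2,11)$ or to outer automorphisms is required.
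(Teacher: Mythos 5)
Your proof is correct, but it takes a genuinely different route from the paper's. The paper's argument is a short reduction to two external inputs: a \texttt{magma} computation showing that $\PSL(2,p)$ has exactly two conjugacy classes of index-$p$ subgroups, and the fact (from \cite{Gu}, quoted in Example \ref{ex.triple}) that $\PSL(2,p)$ \emph{has} an almost conjugate pair of index-$p$ subgroups; since almost-conjugacy of a pair is unaffected by conjugating either member, any two non-conjugate index-$p$ subgroups must realize that unique pair. You instead verify the class-intersection identity $|C\cap H_1|=|C\cap H_2|$ directly: Dickson's classification pins the index-$p$ subgroups down as $S_4$ (resp.\ $A_5$), most conjugacy classes of $G$ are handled by the observation that their element order is either uniquely represented in $G$ or coprime to $|H_i|$, and the only genuine issue --- the two order-$5$ classes of $\PSL(2,11)$ --- is resolved by your squaring argument, which correctly shows that $g\mapsto g^2$ simultaneously swaps $5_A$ with $5_B$ (via the trace/eigenvalue computation) and the two $A_5$-classes of $5$-cycles, forcing each copy of $A_5$ to meet each of $5_A,5_B$ in exactly $12$ elements. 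Your version is more self-contained and explains \emph{why} the pair is almost conjugate rather than certifying it by computation, at the cost of invoking Dickson and the conjugacy-class structure of $\PSL(2,p)$; the paper's version is shorter and additionally records that the two subgroup classes are swapped by $\mathrm{Out}(\PSL(2,p))$, a fact your argument does not address (though it is not part of the lemma's statement).
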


\begin{proof}
This can be done efficiently in \texttt{magma}, since
a computation reveals that for $p=7,11$, the group $\PSL(2,p)$
has only two subgroups of index $p$, up to conjugation. Since $\PSL(2,p)$
has a Sunada pair, if follows that the above pair of subgroups is the
unique Sunada pair, up to conjugation. Moreover, $H_1$ and $H_2$ are 
interchanged by the outer automorphism group $\mathrm{Out}(\PSL(2,p))=\BZ/2\BZ$.
\end{proof}

\begin{corollary}
\label{cor.good}
Every $p$-good rep for $p=7,11$ is a $p$-rep.
\end{corollary}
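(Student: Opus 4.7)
The plan is to unpack the hypothesis that both $M_1$ and $M_2$ are $1$-cusped and show that, by itself, this forces $\rho(P)$ to be a nontrivial subgroup of $\PSL(2,p)$ all of whose nontrivial elements are parabolic, which is exactly the defining condition of a $p$-rep.

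First I would translate the $1$-cuspedness of $M_i$ into a transitivity statement about $\rho(P)$. Since $M$ has a single cusp with peripheral subgroup $P$, standard covering-space theory puts the cusps of the cover $M_i$ corresponding to $\Gamma_i=\rho^{-1}(H_i)$ into bijection with the $P$-orbits on $\Gamma/\Gamma_i$; via the epimorphism $\rho$ these are exactly the $\rho(P)$-orbits on the $p$-element set $G/H_i$. Thus ``$M_i$ is $1$-cusped'' becomes ``$\rho(P)$ acts transitively on $G/H_i$.''

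The key step is then a group-theoretic computation inside $G=\PSL(2,p)$. Because $P\cong\BZ^2$ is abelian, $\rho(P)$ is an abelian subgroup of $G$, and a transitive action of an abelian group on a set of prime cardinality $p$ forces its order to be divisible by $p$. I would then invoke the standard fact that the centralizer in $\PSL(2,p)$ of any nontrivial unipotent element is the cyclic Sylow $p$-subgroup itself; consequently any abelian subgroup of $G$ whose order is divisible by $p$ must coincide with a Sylow $p$-subgroup and therefore have order exactly $p$. This pins $\rho(P)$ down as a Sylow $p$-subgroup.

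A Sylow $p$-subgroup of $\PSL(2,p)$ is cyclic of order $p$ and consists of the identity together with $p-1$ unipotent, i.e.\ parabolic, elements. Hence $\rho(P)\neq 1$ and every nontrivial element of $\rho(P)$ is parabolic, so $\rho$ is a $p$-rep. The only nonformal ingredient is the centralizer computation in $\PSL(2,p)$, which is routine, so I do not anticipate any real obstacle.
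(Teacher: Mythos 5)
Your proof is correct and is essentially the argument the paper intends: the corollary is stated without proof, but it is precisely the converse of the cusp-counting step in Proposition \ref{key}, where one-cuspedness of the cover is equated with $[P:P_i]=p$, i.e.\ with transitivity of $\rho(P)$ on the $p$-point set $G/H_i$, and your observation that an abelian subgroup of $\PSL(2,p)$ of order divisible by $p$ must be a (cyclic, unipotent) Sylow $p$-subgroup --- via the centralizer of a nontrivial unipotent element --- finishes it. I see no gaps.
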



\section{An example: covers of a knot complement in $S^3$}
\label{sec.ex1}


In the next section we will prove Theorem \ref{main}. It is instructive
in this section to present an example of Proposition
\ref{key}, as some of the methods used in this example
will be employed below. We discuss the method in a more general
framework in Section \ref{sec.2methods}.

Let $K$ be the knot $K11n116$ of the Hoste-Thistlethwaite table 
shown in Figure \ref{f.11n116}. $K$ is known as $11n114$ in the 
\texttt{Snap} census \cite{snap}, $11_{298}$ in
the \texttt{LinkExteriors} table, $t12748$ in the 
\texttt{OrientableCuspedCensus} and $K8_{297}$
in the \texttt{CensusKnots}.

\begin{figure}[htpb]
\begin{center}
\includegraphics[height=0.20\textheight]{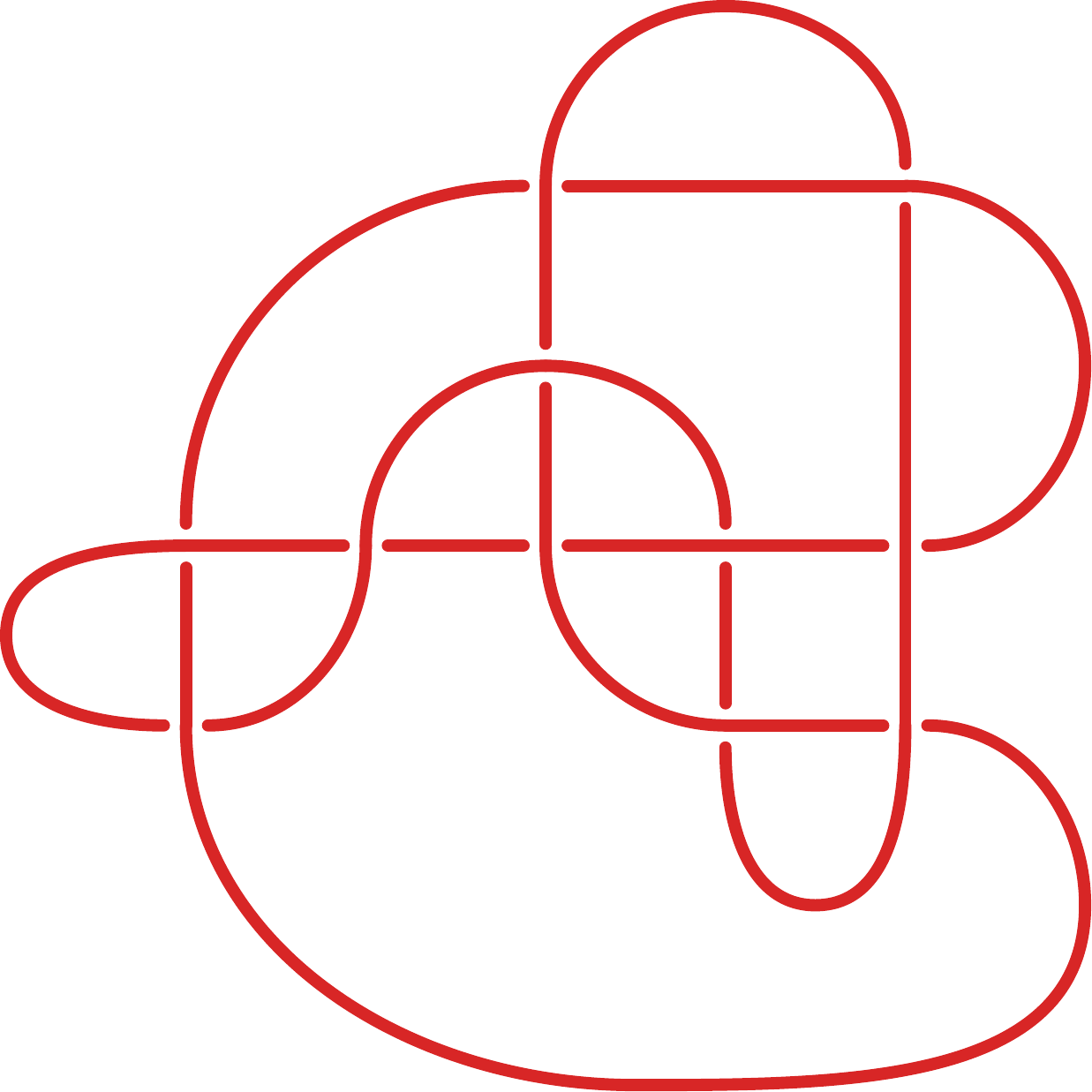}
\caption{The knot $K11n116$.}
\label{f.11n116}
\end{center}
\end{figure}

Using \texttt{Snap}, the manifold $M=S^3\setminus K={\bf H}^3/\Gamma$ has
a decomposition into $8$ ideal tetrahedra, has volume
$7.754453760202655\ldots$ and invariant trace field $k={\bf Q}(t)$
where $t=0.0010656 - 0.9101192 i$ is a root of the irreducible polynomial
$$
p(x)=x^8-3x^7+5x^6-3x^5+2x^4+2x^3+2x+1.
$$
Note that the discriminant of this polynomial is $156166337$, a prime, and
so this is the discriminant of $k$. Hence the ring of integers of $k$
(denoted $R_k$) coincides with $\BZ[t]$.

\texttt{Snap} shows that the geometric representation of $\Gamma$
has traces, lying in $R_k$ (see below).
In \cite{GHH} it is shown that $\Gamma=\Comm(\Gamma)$, and so we are in a 
position to apply Proposition \ref{key}.

\subsubsection{7-fold covers}

From above $7$ is unramified in $k/{\bf Q}$ (since $7$ does
not divide the discriminant of $k$), and using Pari \cite{pari}
for example, it can be shown that the ideal $(7)=7R_k$ factors as a product
${\calP}_1{\calP}_2{\calP}_3$ of prime ideals ${\calP}_i$ for $i=1,2,3$
of norm $7$, $7^2$ and $7^5$ respectively. 
$k$ has class number $1$, so all ideals are principal, and in the 
above notation, the prime ideal ${\calP}_1$ coincides with $(t-1)$. 

We will use the prime ideal ${\calP}_1$ (henceforth
denoted simply by $\calP$) to construct a $p$-rep as in Proposition
\ref{key}. To that end, we need to identify a particular conjugate of
$\Gamma$ with matrix entries in $R_k$. 
\texttt{Snap} yields the following presentation of $\Gamma$:

$$
\Gamma = \la a, b, c \,\,| \,\, aaCbAccBB, \, aacbCbAAB \ra
$$
with peripheral structure
$$
\mu= CbAcb, \qquad \lambda= AAbCCbacb \,,
$$
where, as usual $A=a^{-1}$, $B=b^{-1}$ and $C=c^{-1}$. Using \texttt{Snap} it
can shown that $\Gamma$ can be taken to be a subgroup of
$\PSL(2,R_k)$ represented by matrices as follows (note that from the 
irreducible polynomial of $t$ we see that $t$ is a unit):

$$ a =
\left(
\begin{array}{cc}
 -t^2+t-1 & t^7-3 t^6+4 t^5-t^4+t^2-t \\
 -t^2+t-1 & 0 \\
\end{array}
\right)$$

$$b =
\left(
\begin{array}{cc}
 -t^7+2 t^6-2 t^5-3 t^3+2 t^2-3 t-1 & t^6-2 t^5+t^4+3 t^3-2 t^2+3 t+2 \\
 -t^7+3 t^6-5 t^5+4 t^4-4 t^3+2 t^2-2 t-1 & t^7-3 t^6+5 t^5-4 t^4+4 t^3-t^2+t+2 \\
\end{array}
\right)$$

$$c =
\left(
\begin{array}{cc}
 -t^6+4 t^5-8 t^4+7 t^3-5 t^2-t & -2 t^7+7 t^6-14 t^5+15 t^4-12 t^3+t^2+3 t-1 \\
 t^5-3 t^4+4 t^3-3 t^2+t & -t^7+4 t^6-9 t^5+11 t^4-9 t^3+3 t^2+t-2 \\
\end{array}
\right)$$

The meridian and longitude are given by

$$\mu =
\left(
\begin{array}{cc}
 t^7-4 t^6+8 t^5-8 t^4+5 t^3-2 t & -t^7+2 t^6-3 t^5+t^4-2 t^3-4 t^2-2 t-1 \\
 t^7-4 t^6+9 t^5-11 t^4+10 t^3-3 t^2+3 & -t^7+4 t^6-8 t^5+8 t^4-5 t^3+2 t-2 \\
\end{array}
\right)$$

$$\lambda =
\left(
\begin{array}{cc}
 -2 t^7+6 t^6-10 t^5+7 t^4-7 t^3+3 t^2-8 t-1 & 2 t^7-9 t^6+18 t^5-19 t^4+15 t^3-11 t^2+3 t+6 \\
 6 t^7-20 t^6+38 t^5-35 t^4+31 t^3-t^2-t+18 & 2 t^7-6 t^6+10 t^5-7 t^4+7 t^3-3 t^2+8 t-1 \\
\end{array}
\right)$$

Now let $\rho_7: \Gamma \rightarrow \PSL(2,7)$
denote the $p$-rep obtained by reducing entries of these matrices
modulo $\calP$. A computation gives:
$$
\rho_7(a) =
\left(
\begin{array}{cc}
 6 & 1 \\
 6 & 0 \\
\end{array}
\right)
\qquad
\rho_7(b) =
\left(
\begin{array}{cc}
 1 & 6 \\
 3 & 5 \\
\end{array}
\right)
\qquad
\rho_7(c) =
\left(
\begin{array}{cc}
 3 & 4 \\
 0 & 5 \\
\end{array}
\right)
$$
and
$$
\rho_7(\mu) =
\left(
\begin{array}{cc}
 0 & 4 \\
 5 & 5 \\
\end{array}
\right)
\qquad
\rho_7(\lambda) =
\left(
\begin{array}{cc}
 2 & 5 \\
 1 & 3 \\
\end{array}
\right)
$$
We now check that $\rho_7$ is onto. To see this, note that
$T=\rho_7(aB) = \left(
\begin{array}{cc}
 -1 & 0 \\
 2 & -1 \\
\end{array}
\right)$
and performing the conjugation 
$\rho_7(a)T\rho_7(A)$ gives the matrix
$\left(
\begin{array}{cc}
 -1 & 2 \\
 0 & -1 \\
\end{array}
\right)$ \,.

Finally, after taking powers of these elements we see that
$\rho_7(\Gamma)$ contains the elements
$ \left(
\begin{array}{cc}
 1 & 0 \\
 1 & 1 \\
\end{array}
\right)$
and
$\left(
\begin{array}{cc}
 1 & 1 \\
 0 & 1 \\
\end{array}
\right).
$
These clearly generate $\PSL(2,7)$, and we are now in
a position to apply Proposition \ref{key} to complete the construction
of examples.

\subsubsection{11-fold covers}

$11$ is also unramified in $k/\BQ$ and $(11)=\calQ_1 \calQ_2 \calQ_3$
where $\calQ_i$ for $i=1,2,3$ are prime ideals of norm $11$, $11$ and $11^6$. 
Moreover, we can take $\calQ_1=(t+1)$ and $\calQ_2=(t^2-t-1)$.

Let $\rho'_{11},\rho''_{11}: \Gamma \longto \PSL(2,11)$
denote the $p$-reps obtained by reducing entries of these matrices
modulo $\calQ_1$ and $\calQ_2$ respectively. 
A computation gives:

{\small
$$
\rho'_{11}(a) =
\left(
\begin{array}{cc}
 8 & 4 \\
 8 & 0 \\
\end{array}
\right)
\qquad
\rho'_{11}(b) =
\left(
\begin{array}{cc}
 1 & 9 \\
 9 & 5 \\
\end{array}
\right)
\qquad
\rho'_{11}(c) =
\left(
\begin{array}{cc}
 9 & 3 \\
 10 & 1 \\
\end{array}
\right)
$$
}
{\small
$$
\rho'_{11}(\mu) =
\left(
\begin{array}{cc}
 9 & 6 \\
 9 & 0 \\
\end{array}
\right)
\qquad
\rho'_{11}(\lambda) =
\left(
\begin{array}{cc}
 9 & 6 \\
 9 & 0 \\
\end{array}
\right)
$$
}
and
{\small
$$
\rho''_{11}(a) =
\left(
\begin{array}{cc}
 9 & 6 \\
 9 & 0 \\
\end{array}
\right)
\qquad
\rho''_{11}(b) =
\left(
\begin{array}{cc}
 4 & 36 \\
 12 & 12 \\
\end{array}
\right)
\qquad
\rho''_{11}(c) =
\left(
\begin{array}{cc}
 32 & 12 \\
 28 & 4 \\
\end{array}
\right)
$$
}
{\small
$$
\rho''_{11}(\mu) =
\left(
\begin{array}{cc}
 32 & 0 \\
 32 & 32 \\
\end{array}
\right)
\qquad
\rho''_{11}(\lambda) =
\left(
\begin{array}{cc}
 32 & 0 \\
 28 & 32 \\
\end{array}
\right)
$$
}
Note that $\rho'_{11}$ and $\rho''_{11}$ are not intertwined by an automorphism
of $\PSL(2,11)$ since $\rho'_{11}(\mu)=\rho''_{11}(\lambda)$ but
$\rho'_{11}(\mu) \neq \rho''_{11}(\lambda)$.

\begin{remark} 
The construction of closed examples in \cite{Reid} 
arise from Dehn surgery on the knot $9_{32}$ 
(a construction that we extend below). 
Proposition \ref{key} can be applied to show that
examples of isospectral 1-cusped manifolds arise as 
$11$-fold covers of $S^3\setminus 9_{32}$.  The examples constructed
above have much smaller volume and so are perhaps more interesting.
\end{remark}


\section{Proof of Theorem \ref{main}: Infinitely many examples}
\label{sec.mainproof}

In this section we complete the proof of Theorem \ref{main} 
by exhibiting infinitely
many examples.  This builds on the ideas of \cite[Sec.3]{Reid}  and
Section 4.

\subsection{A lemma}

Using ideas from \cite{Reid} together with Proposition \ref{key}, we
will prove the following. This will complete the proof of Theorem
\ref{main}, given the existence of a 2-cusped manifold as in Lemma
\ref{1cuspedminimal} (which we exhibit in Subsection 5.2).

\begin{lemma}
\label{1cuspedminimal}
Let $M={\bf H}^3/\Gamma$ be an orientable non-arithmetic finite volume
2-cusped hyperbolic 3-manifold that is the 
minimal element in its commensurability class. Suppose that $\rho$ is
a $p$-rep of $\Gamma$ onto $G=\PSL(2,7)$ or $\PSL(2,11)$. Then there
are infinitely many Dehn surgeries $r=p/q$ on one cusp of $M$ so that
the resultant manifolds $M(r)$ are hyperbolic and 
have 1-cusped covers that are isospectral
but non-isometric.
\end{lemma}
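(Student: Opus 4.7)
The strategy is to produce an infinite family of Dehn fillings $M(r)$ on one cusp of $M$ to which Proposition \ref{key} can be applied. Fix the cusp $c$ of $M$ to be filled and a maximal peripheral subgroup $P<\Gamma$ for $c$. Since $\rho$ is a $p$-rep, $\rho(P)$ is a cyclic parabolic subgroup of $G$ of order $p$, so $L:=\ker(\rho|_P)$ is a sublattice of index $p$ in $P\cong \BZ\oplus\BZ$. For every primitive slope $r\in L$, the Dehn filling relation lies in $\ker\rho$, so $\rho$ descends to a surjection $\rho_r\colon \pi_1(M(r))\to G$; this $\rho_r$ is again a $p$-rep since the peripheral subgroup of the unfilled cusp is unchanged. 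The sublattice $L$ contains infinitely many primitive slopes, giving the candidate pool.

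Next I would arrange that $M(r)$ satisfies the hypotheses of Proposition \ref{key} for all but finitely many $r\in L$. Thurston's hyperbolic Dehn surgery theorem shows that all but finitely many $r\in L$ yield 1-cusped finite volume hyperbolic 3-manifolds $M(r)$, with $\vol(M(r))\to \vol(M)$ from below. Since arithmetic hyperbolic 3-orbifolds of bounded covolume form a finite set (up to commensurability), and a standard invariant trace field argument via Thurston's deformation theory (as in \cite[Sec.3]{Reid}) shows that the trace field of $M(r)$ varies nontrivially with $r$, at most finitely many $M(r)$ can be arithmetic. So we may restrict to an infinite subset of slopes on which $M(r)$ is non-arithmetic and admits the $p$-rep $\rho_r$ onto $G$.

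The principal obstacle is to ensure that $M(r)$ is the minimal element in its commensurability class for infinitely many $r\in L$. Following the approach of \cite[Sec.3]{Reid}, a hidden symmetry of $M(r)$ would produce an element $g\in\Isom({\bf H}^3)\setminus\pi_1(M(r))$ normalizing a finite-index subgroup of $\pi_1(M(r))$. Because $M$ itself satisfies $\Comm(\Gamma)=\Gamma$, and because $\pi_1(M(r))$ is obtained from $\Gamma$ by adjoining the single surgery relation parametrized by $r$, a Mostow-rigidity and deformation-theoretic argument limits such $g$ to finitely many slopes $r$. Restricting to an infinite subfamily of $r\in L$ on which $M(r)$ is hyperbolic, 1-cusped, non-arithmetic, and minimal in its commensurability class, Proposition \ref{key} produces a pair of 1-cusped isospectral non-isometric degree-$p$ covers of each such $M(r)$. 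Since $\vol(M(r))$ takes infinitely many distinct values, infinitely many distinct isospectral pairs arise, completing the proof.
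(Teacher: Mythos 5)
Your proposal is correct and follows essentially the same route as the paper: descend the $p$-rep through the fillings whose slopes lie in $\ker(\rho|_{P})$ (your index-$p$ sublattice formulation is just a cleaner packaging of the paper's explicit search for coprime pairs $(p,q)$ with $ps+qt\equiv 0 \pmod p$), then invoke Thurston's Dehn surgery theorem together with \cite[Thm.~3.2]{Reid} to get hyperbolicity, non-arithmeticity and minimality in the commensurability class for all but finitely many slopes, and finish with Proposition \ref{key}. The only difference is that where you sketch the rigidity/deformation argument for minimality, the paper simply cites Reid's theorem.
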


\begin{proof} 
We will deal with the case of $G=\PSL(2,7)$, the other
case is similar.  Associated to the two cusps of $M$ we
fix two peripheral subgroups $P_1$ and $P_2$, and we will perform
Dehn surgery on the cusp associated to $P_2$, thereby preserving parabolicity
of the non-trivial elements of $P_1$ after Dehn surgery. 

Fix a pair
of generators $\mu$ and $\lambda$ for $P_2$.
By $p/q$-Dehn surgery on the cusp associated to $P_2$ we mean that
the element $\mu^p\lambda^q$ is trivialized. We denote the result of
$p/q$-Dehn surgery by $M(p/q)$.  Note that for sufficiently large
$|p|+|q|$, the resultant surgered manifolds will be 1-cusped hyperbolic
manifolds and will still be the minimal elements in their commensurability 
class (see Theorem 3.2 of \cite{Reid}).

Since $\rho$ is a $p$-rep, $\rho(P_2)$ is non-trivial. 
Performing $p/q$-Dehn surgery on the cusp associated to $P_2$,
if we can arrange that $\rho(\mu^p\lambda^q)=1$, 
then the $p$-rep $\rho$ will factor through $\pi_1(M(p/q))$, 
thereby inducing a $p$-rep of $\pi_1(M(p/q))$.

Now $\rho(P_2)$ is a cyclic subgroup $C=\la x \ra$ of order $7$. Hence there
are integers $s,t\in\{0,\pm 1,\pm 2, \pm 3\}$ (not both zero)
so that $\rho(\mu)=x^s$ and $\rho(\lambda)=x^t$. Hence we
need to find infinitely many co-prime pairs $(p,q)$ which satisfy
$ps+qt=7d$ with $s,t$ as above and for integers $d$.  This is easily 
arranged by elementary number theory. For example, 
if exactly one of $\rho(\mu)$ or $\rho(\lambda)$ is trivial (say 
$\rho(\lambda)$), then we can choose integers $p=7n$
and $q$ coprime to $7n$ will suffice to prove the lemma in this case.
If both $s,t\neq 0$, a simlar argument holds. For example suppose that 
$s=t=2$. Then choosing $q=1$ and $p$ an integer of the form $7a-1$ will work.

Thus we have constructed infinitely many 1-cusped hyperbolic 3-manifolds
with a $p$-rep onto $\PSL(2,7)$ and so the proof is complete by an
application of Proposition \ref{key}.
\end{proof}

\subsection{A 2-component link--$9_{34}^2$}
\label{sub.ex3}


From \cite{GHH} the 2-component link $L=9^2_{34}$ of Rolfsen's table
(which is the link $9a62$ in the \texttt{Snap} census and $L9a21$ in the 
Hoste-Thistlethwaite table) shown in Figure 
\ref{f.9a62} has the property that $M=S^3\setminus L$ is the minimal 
element in its commensurability class. 
 
\begin{figure}[htpb]
\begin{center}
\includegraphics[height=0.20\textheight]{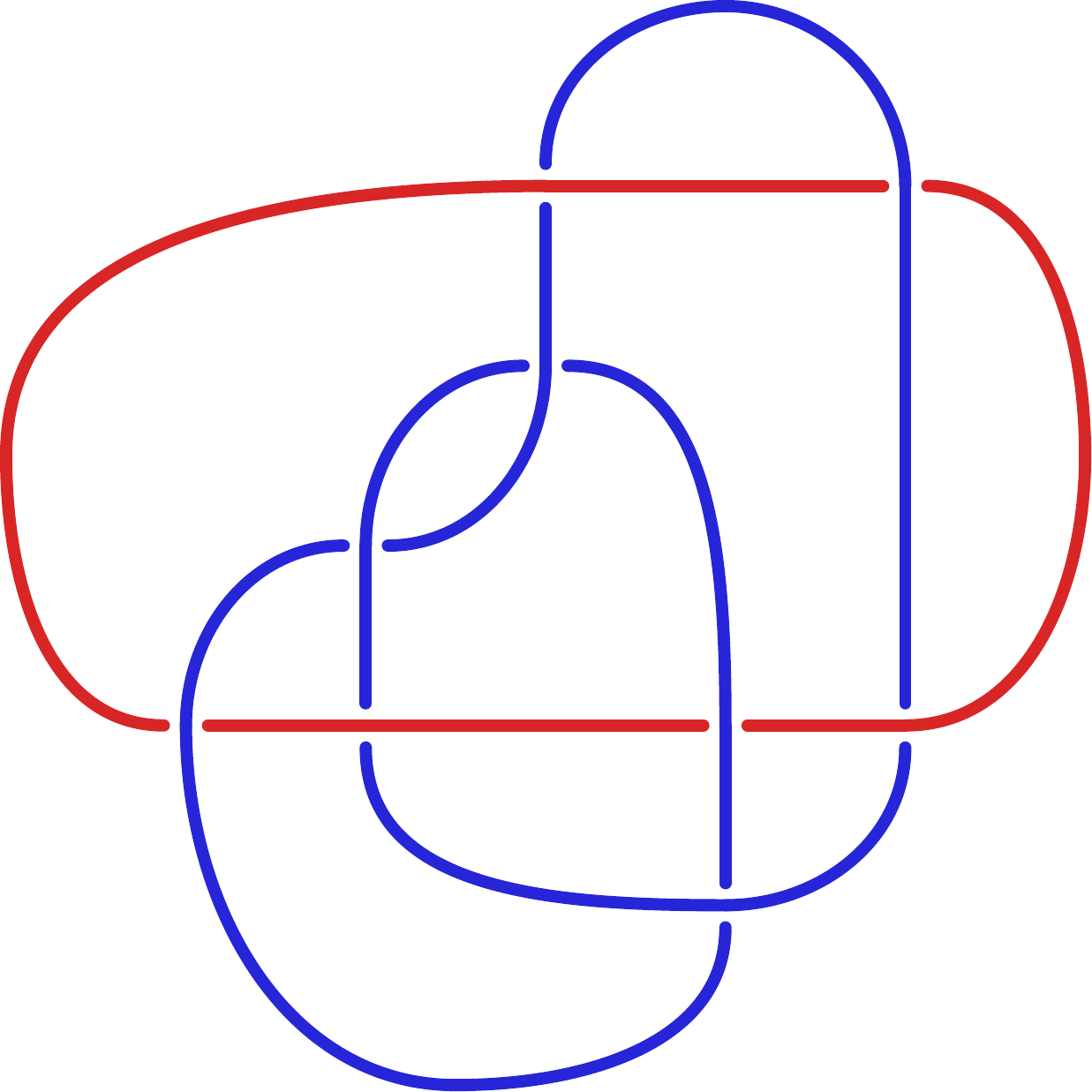}
\caption{The link $9^2_{34}$.}
\label{f.9a62}
\end{center}
\end{figure}

The link complement has volume
approximately $11.942872449472\ldots$ and invariant trace-field $k$ generated
by a root $t$ of:
$$
p(x)=x^{10} - x^9 - x^8 - x^7 + 6x^6 + x^5 - 3x^4 - 4x^3 + 2x^2 + 2x - 1.
$$
As can be checked using Pari, $(7)=\calP_1 \calP_2 \calP_3 \calP_4 \calP_5$
where $\calP_i$ for $i=1,\dots,5$ are prime ideals of norm $7$, $7$, $7^2$,
$7^3$ and $7^3$. Moreover, we can take $\calP_1=(t+1)$.
The fundamental group has presentation
$$
\Ga=\la a, b, c \,\,| \,\,
a B A C b c c a b C C B c a b A c b, \, a b A c b a C C B c c A B C
\ra
$$
with peripheral structure
$$
(\mu_1,\l_1)=(b, B B A c b a C C), \qquad 
(\mu_2,\l_2)=(B C, a B A C b c c a C C b c c B A C b)
\,.
$$
Following the ideas above 
it can be shown that the faithful discrete representation of
$\pi_1(M)$ can be conjugated to lie in $\PSL(2,R_k)$ and that reducing
modulo $\calP_1$ provides a $p$-rep onto $\PSL(2,7)$ given by
$$
\rho(a)=\left(
\begin{array}{cc}
 3 & 5 \\
 0 & 5 \\
\end{array}
\right), \qquad 
\rho(b)=
\left(
\begin{array}{cc}
 3 & 1 \\
 5 & 2 \\
\end{array}
\right),  \qquad 
\rho(c)=
\left(
\begin{array}{cc}
 3 & 6 \\
 6 & 3 \\
\end{array}
\right)
$$
$$
\rho(\mu_1)=\left(
\begin{array}{cc}
 3 & 1 \\
 5 & 2 \\
\end{array}
\right),  \qquad 
\rho(\l_1)=
\left(
\begin{array}{cc}
 4 & 6 \\
 2 & 5 \\
\end{array}
\right), \qquad
\rho(\mu_2)=
\left(
\begin{array}{cc}
 5 & 6 \\
 2 & 4 \\
\end{array}
\right), \qquad 
\rho(\l_2)=
\left(
\begin{array}{cc}
 3 & 6 \\
 2 & 2 \\
\end{array}
\right)
$$
Moreover, fixing a cusp, $\rho$ can be conjugated to a representation
such that the meridian and longitude pair of both map to 
$\left(
\begin{array}{cc}
 1 & -1 \\
 0 & 1 \\
\end{array}
\right)$
Choosing $p=-(7n+1)$ (sufficiently large) and $q=1$ provides explicit
Dehn surgeries as given by Lemma \ref{1cuspedminimal}.


\section{Two methods to construct Sunada pairs}
\label{sec.2methods}

We now discuss two methods for implementing Proposition \ref{key}.
In Section \ref{sec.ex1}, an example of a minimal knot complement
was used to build examples (we will refer to this example as Example 1).
The framework for this was to
reduce the geometric representation, defined over a localization
of the ring of integers of a number field, modulo a prime of norm $7$ or $11$.
We shall call this {\em Method G}. A second method (which we refer to
{\em Method R}), mentioned at the end
of Section \ref{sec.sunada}, is to compute all $p$-good reps for $p=7,11$. 
Each method has its own merits. Method R can be implemented efficiently by 
\texttt{magma} and \texttt{SnapPy} to search over lists of manifolds. 
Method G (which involves exact arithmetic computations) requires a combination 
of \texttt{Snap}, \texttt{SnaPy}, \texttt{pari} and \texttt{sage} and a lot of 
cutting and pasting, but produces infinitely many 1-cusped examples.

Let us describe Method G in more detail. We start with a 
cusped orientable hyperbolic $3$-manifold $M$. Its geometric representation 
$$
\pi_1(M) \longto \PSL(2,R)
$$
can be defined over a subring $R$ of an extension of the invariant trace-field.
In many cases, this is actually
contained in the invariant trace-field
$k$ (e.g. for knots in integral homology 3-spheres). If we can find a prime
ideal $\calP$ in $R$ of norm $7$ or $11$ which is not inverted in $R$,
then we can reduce the geometric representation of $M$ to get a 
representation $\rho: \pi_1(M) \longto \PSL(2,p)$ for $p=7$ or $p=11$.
We can further check that $\rho$ is a $p$-rep. If we can also compute
the commensurator of $\pi_1(M)$, then we can apply Proposition \ref{key}.

Before we get into the details, let us recall that (Hoste-Thistlethwaite
and Rolfsen) tables of hyperbolic knots are available from \texttt{SnapPy} 
\cite{snappy} and from \texttt{Snap} \cite{snap}. A consistent conversion 
between these tables is provided by \texttt{SnapPy} \cite{snappy}.



\section{More examples}
\label{sec.examples}

\subsection{Example 1 via Method R}
\label{sub.ex1}


Consider the knot $K=K11n116$ from Figure \ref{f.11n116} of Section 
\ref{sec.ex1}.


Setting $M=S^3\setminus K$, \texttt{magma} computes that $\pi_1(M)$ has $4$ 
epimorphisms in $\PSL(2,7)$ and
two of them are $7$-good reps. (corresponding to those we found in Section 4). 
The corresponding pair $M_1$ and $M_2$
of index $7$ covers are isospectral and non-isometric.
We can also confirm that $M_1$ and $M_2$ are not isometric using the
{\em isometry signature} (a complete invariant) of \cite{census-tet}.
As shown by \texttt{magma} 
both have common homology $\BZ/2 + \BZ/110 + \BZ$.  


\texttt{SnapPy} computes that $M$ has $42$ 11-fold covers. Of those, $8$ have
a total space with one cusp, and among those, we find $11$-good covers:
there is one
pair of covers with homology $\BZ/2 + \BZ/210 + \BZ$ and another pair
with homology $\BZ/2 + \BZ/406 + \BZ$ and non-isometric total spaces for either
pair. These pairs $(M'_1,M'_2)$ and $(M''_1,M''_2)$ 
are built from the epimorphisms $\rho'_{11}$ and $\rho''_{11}$.
$(M'_1,M'_2)$ and also $(M''_1,M''_2)$ 
are isospectral.

\subsection{Example 2: covers of the manifold v2986 via Method G}
\label{sub.ex2}


Let $M={\bf H}^3/\Gamma$ denote the manifold from the \texttt{Snap} census
$v2986$. \texttt{SnapPy} confirms that $M$ is not a knot complement in $S^3$ 
(since it can be triangulated using 7 ideal tetrahedra and is not isometric
to a manifold in \texttt{CensusKnots}, the complete list of hyperbolic
knots with at most 8 tetrahedra) but it does have $H_1(M;\BZ)=\BZ$. 
The volume of $M$ is approximately $6.165768948\ldots$ (which is less 
than the previous example).  Again
from \cite{GHH}, we have that $\Gamma=\Comm(\Gamma)$. \texttt{Snap} gives the
the following presentation of the fundamental group $\Ga$ 
$$
\Ga=\la a, b, c \,\,| \,\,
 a c b C B a B A c, \,  a b c b b A A C 
\ra
$$
with peripheral structure
$$
\mu= C , \qquad \l= B C a b A A  \,.
$$
From \texttt{Snap} we
see that $\Gamma$ has integral traces and has invariant trace-field generated
by a root of the polynomial
$$
p(x)=x^8-2x^7-x^6+4x^5-3x^3+x+1 
$$
Using Pari, we get a decomposition $(7)=\calP_1 \calP_2 \calP_3$
into prime ideals $\calP_1$, $\calP_2$ and $\calP_3$ of norm $7$, $7^3$ and 
$7^4$. Moreover, we can take $\calP_1=(t^3-t-1)$.
The geometric representation is still defined over $R_k$, and its
reduction $\rho_7: \Gamma \longto \PSL(2,7)$ modulo $\calP_1$ is given by:
$$
\rho_7(a) =
\left(
\begin{array}{cc}
 10 & 4 \\
 4 & 8 \\
\end{array}
\right)
\qquad
\rho_7(b) =
\left(
\begin{array}{cc}
 0 & 8 \\
 6 & 12 \\
\end{array}
\right)
\qquad
\rho_7(c) =
\left(
\begin{array}{cc}
 4 & 2 \\
 6 & 12 \\
\end{array}
\right)
$$
and
$$
\rho_7(\mu) =
\left(
\begin{array}{cc}
 12 & 12 \\
 8 & 4 \\
\end{array}
\right)
\qquad
\rho_7(\lambda) =
\left(
\begin{array}{cc}
 8 & 6 \\
 4 & 4 \\
\end{array}
\right)
$$
As before, one can check that $\rho_7$ is onto, and thereby construct 
isospectral covers with one cusp using Proposition \ref{key}.

\subsection{Example 3: covers of knot complements with at most 
8 tetrahedra via Method R}

Of the 502 hyperbolic knots in \texttt{CensusKnots} with at most 8
ideal tetrahedra, \texttt{SnapPy} computes that
the following 11 have trivial isometry group:

$$
K8_{226}, K8_{252}, K8_{270}, K8_{277}, K8_{287}, K8_{290},
K8_{292}, K8_{293}, K8_{296}, K8_{297}, K8_{301}
$$
Note that $K8_{297}$ is the knot of Example 1. 
\texttt{Snap} confirms that all of these knots have no hidden symmetries.
Of the above 11 knots, \texttt{magma} finds that the following 8 have
at least one $7$-good-rep:
$$
K8_{252}, K8_{270}, K8_{277}, K8_{290}, K8_{292}, K8_{293}, K8_{297}, K8_{301}
$$
and all 11 have at least one $11$-good-rep. 

\subsection{Example 4: the list of 1-cusped manifolds of \cite{GHH}
via Method R}



\cite{GHH} gave a list of $13486$ hyperbolic manifolds with at least one
cusp, along with their hidden symmetries. Of those with no hidden symmetries, 
$1252$ have one cusp, $1544$ have two cusps and $106$ have four cusps.

There are 6 manifolds with one cusp and no hidden symmetries and with
at most 7 ideal tetrahedra in the above list:
$$
v2986, v3205, v3238, v3372, v3398, v3522  
$$
\texttt{magma} computes that all $7$ of those manifolds have $7$-good reps,
and that the following $3$
$$
v3205, v3238, v3522
$$
have $11$-good reps.

Let $\calM$ denote the list of 1252 one cusped manifolds with no hidden
symmetries, and $\calM_p$ the sublist of those with $p$-good reps for 
$p=7,11$. If $|X|$ denotes the number of elements of a set $X$, a computation
shows that

\begin{equation}
\label{eq.data}
|\calM_7 \cap \calM_{11}|=809, \quad |\calM_7\setminus\calM_{11}|=165, 
\quad |\calM_{11}\setminus\calM_{7}|=220, \quad |\calM\setminus(\calM_7
\cup  \calM_{11})|=58 \,.
\end{equation}
The manifolds in $\calM_7 \cap \calM_{11}$ with at most 10 ideal tetrahedra are
$$
v3205, v3238, v3522, K10n10, K11n27, K11n116, K12n318, K12n644.
$$
The complete data (in \texttt{SnapPy} readable format) is available from 
\cite{Ga:data}.


\section{Final comments}

In this final section we discuss further the nature of the discrete
spectrum for $1$-cusped hyperbolic 3-manifolds.  As described in
Section 2, one issue in the cusped setting is whether there is any
\emph{interesting discrete spectrum}. Theorem \ref{venkov} gives
conditions when the discrete spectrum is infinite, and we will take
this up here for $1$-cusped hyperbolic 3-manifolds.  In what follows
$M={\bf H}^3/\Gamma$ will denote a $1$-cusped orientable finite volume
hyperbolic 3-manifold with discrete spectrum $\lambda_1 \leq \lambda_2 \ldots$.

\subsection{Essentially cuspidal manifolds} 

As was mentioned previously, there is no direct analogue of the Weyl
law for cusped hyperbolic 3-manifolds, however the following
asymptotic that takes account of a contribution from the continuous
spectrum can be established using the Selberg trace formula (see
\cite{EGM} Chapter 6.5 and \cite{Sa2}). To state this, we introduce
the following notation:

For $T>0$ let $A(\Gamma,T) = |\{j : \lambda_j \leq T^2\}|$  and 
$M(\Gamma,T) = -\frac{1}{2\pi} \int_{-T}^T\frac{\phi'}{\phi}(1+it) dt$, 
then
$$
A(\Gamma,T) + M(\Gamma, T) \approx  \frac{1}{6 \pi^2} \vol(M)T^3
\quad \text{as} \quad T\rightarrow \infty \,.
$$

\noindent Therefore, getting good control on the growth of $M(\Gamma,T)$ 
implies a \emph{Weyl law}

\begin{equation}
\tag{$\dagger$}
\label{dagger}
A(\Gamma,T)  \approx \frac{1}{6 \pi^2}
\vol(M)T^3, \quad \text{as} \quad T\rightarrow \infty \,.
\end{equation}

In \cite{Sa2}, Sarnak defines $\Gamma$ or $M$ to be {\em essentially
  cuspidal} if the Weyl law $\eqref{dagger}$ holds.  Thus the issue as to 
whether $M$
is essentially cuspidal is, which of the terms $A(\Gamma,T)$ or
$M(\Gamma, T)$ dominates in the expression $\eqref{dagger}$ above.  It is
known that congruence subgroups of Bianchi groups are essentially
cuspidal (see \cite{Rez}); in this case $M(\Gamma,T) = O(T\log T)$.  
An example of a
non-congruence subgroup of a Bianchi group that is also essentially
cuspidal is given in \cite{Ef}. 

In this regard, Sarnak \cite{Sa2} has
conjectured, in a much broader context than discussed here,
that {\em if $M$ is essentially cuspidal then $M$ is
arithmetic}.  In fact, in the case of surfaces, it is conjectured
(see \cite{Sa1}) that \emph{the generic} $\Gamma$ in a given
Teichm\"uller space is not essentially cuspidal, and indeed (apart from
the case of the $1$-punctured torus) 
\emph{the generic} case should have only finitely many
discrete eigenvalues.  This is based on work of Philips and Sarnak \cite{PS}
on how eigenvalues \emph{dissolve} under deformation.  

\subsection{Knot complements} 
\label{sub.knot.complements}

Even though Theorem \ref{venkov} produces 
non-arithmetic 1-cusped hyperbolic 3-manifolds for which $A(\Gamma,T)$ is 
unbounded, the contribution from $M(\Gamma,T)$ is conjecturally enough to 
violate the Weyl law.  Now there is no analogue of the Philips and Sarnak 
result in dimension $3$, but it seems interesting to understand how the 
discrete spectrum behaves, for example for knot complements in $S^3$.

To that end, the \emph{generic knot complement} will be the minimal element
in its commensurability class, and so will likely have only finitely many 
discrete eigenvalues.  In particular, we cannot apply Theorem \ref{venkov}
to deduce an infinite discrete spectrum.

The figure-eight knot complement is the only arithmetic knot
complement, and it is also known to be a congruence manifold.  Hence,
the complement of the figure-eight knot is essentially cuspidal.
Thus, given Sarnak's conjecture, the figure-eight knot should be the
only knot complement that is essentially cuspidal. We cannot prove this
at present, but we can prove Theorem \ref{main_fig8} which we restate below 
for convenience.

\begin{theorem}
\label{main_fig8_2}
Let $M$ denote the complement of the figure-eight knot in $S^3$. Suppose
that $N$ is a finite volume hyperbolic 3-manifold which is isospectral with
$M$. Then $N$ is homeomorphic to $M$.
\end{theorem}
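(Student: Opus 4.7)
The plan is in three stages: use the isospectrality hypothesis to recover $\vol(N)$, apply Cao--Meyerhoff to reduce to at most two candidates, and then separate the figure-eight knot complement $M$ from its sister $M'$ using the complex length spectrum.

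First, I would show that $\vol(N)=\vol(M)$ and that $N$ is $1$-cusped and orientable. The spectral data of Definition~\ref{isospectral} determine both $A(\Gamma,T)$ (from the discrete eigenvalues with multiplicities) and the scattering contribution $M(\Gamma,T)$ (from the poles of $\phi(s)$ with multiplicities, via the argument principle), so by \eqref{dagger} one obtains $\vol(N)=\vol(M)$. Alternatively, Remark~4.2 (invoking \cite{Kelmer}) gives equality of complex length spectra, and the Selberg trace formula recovers the common volume from that data. The existence and structure of the scattering function also force $N$ to be orientable, of finite volume, and $1$-cusped: a closed $N$ has no continuous spectrum, while a multi-cusped $N$ has a scattering matrix of rank greater than one.

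Second, by the Cao--Meyerhoff theorem, the figure-eight knot complement $M$ and its sister $M'$ are the \emph{only} orientable cusped hyperbolic $3$-manifolds of minimum volume $2.02988\ldots$. Hence $N$ is isometric to $M$ or to $M'$, and by Mostow rigidity, in this setting isometric is equivalent to homeomorphic. It therefore suffices to exclude $N\cong M'$.

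Third, I would rule out $N\cong M'$ by establishing that $M$ and $M'$ have distinct complex length spectra. A direct computation in \texttt{Snap}/\texttt{SnapPy}, comparing the few shortest complex translation lengths and their multiplicities in $M$ versus $M'$, reveals a discrepancy at the bottom of the spectra. Combined with Remark~4.2, this shows $M$ and $M'$ are not isospectral, so $N\cong M$. The main obstacle is this third step: both $M$ and $M'$ are arithmetic and commensurable with the Bianchi group $\PSL(2,\mathcal{O}_3)$, so they share a great deal of coarse structure (volume, invariant trace field, a common covering orbifold, and substantial overlap in their real length spectra). Distinguishing them requires detecting the rotation angles along closed geodesics, which is precisely why \emph{complex}, rather than merely real, length spectra are the right tool. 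Verifying the discrepancy is in practice a finite computation, but one can also argue it abstractly by tracking which conjugacy classes of loxodromic elements in $\PSL(2,\mathcal{O}_3)$ descend to the two different subgroups corresponding to $M$ and $M'$.
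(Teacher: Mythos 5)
Your overall skeleton (recover the volume from the spectral data via the Weyl law, invoke Cao--Meyerhoff to reduce to the figure-eight complement and its sister, then separate the two by geodesic lengths) is the same as the paper's. But your third step has a genuine gap: you deduce ``equality of complex length spectra'' of $N$ and $M$ from isospectrality by appealing to Remark 3.6 and \cite{Kelmer}. That remark asserts the \emph{opposite} implication --- the complex length spectrum determines the Laplace spectrum --- and nothing cited gives you the converse. What one can actually extract from the spectral data is the Selberg trace formula identity, and the loxodromic term there only yields the twisted multiplicities
$$
m_\Gamma(\ell)=\mathop{\sum_{\gamma\in \Gamma_{\mathrm{lox}}}}_{\ell_\gamma=\ell}
\frac{\ell_{\gamma_0}}{\sqrt{2\sinh\bigl(\tfrac{\ell_\gamma+i\theta_\gamma}{2}\bigr)}},
$$
in which the rotation angles $\theta_\gamma$ are already summed over; one cannot unscramble the full complex length spectrum from these. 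This is exactly why the paper proves only the weaker Proposition \ref{dubiproof}: isospectral $1$-cusped manifolds have the same \emph{set} of real lengths of closed geodesics, without multiplicities (using test functions in the trace formula to isolate each $\ell$ and conclude $m_{\Gamma_1}(\ell)=m_{\Gamma_2}(\ell)$, together with the nonvanishing of $m_\Gamma(\ell)$ at actual lengths).

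Fortunately the weaker statement suffices, and here your proposal is also turned around: you assert that real lengths cannot distinguish $M$ from its sister and that detecting rotation angles is essential. In fact the real length \emph{sets} already differ --- the sister contains a closed geodesic of length $0.86255\ldots$ (coming from a trace-$\omega$ element of $\PSL(2,\BZ[\omega])$) while the figure-eight complement's shortest geodesic has length $1.08707\ldots$ and contains no geodesic of the sister's minimal length. The paper verifies this rigorously in Section \ref{shortlength} by classifying the $\PSL(2,\BZ[\omega])$-conjugacy classes of elements of trace $\pm\omega,\pm\overline{\omega}$ (a class-number computation in $\BQ(\theta)$) and checking, via the permutation representations on cosets, which classes meet the two index-$12$ subgroups; your closing sentence about ``tracking which conjugacy classes descend'' is the right idea and is what you should make precise. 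So: replace ``same complex length spectrum'' by ``same length set'' (proved via the trace formula as in Proposition \ref{dubiproof}), and replace the claim that complex lengths are needed by the explicit comparison of shortest real lengths. With those repairs your argument matches the paper's proof.
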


\begin{proof} 
Since $N$ is isospectral with $M$, $N$ cannot be closed since the
poles of the scattering function are part of the spectral data.
The result will follow once the following two claims 
are established.\\[\baselineskip]
\noindent{\bf Claims:}~{\em (1) $\Vol(N)=\Vol(M)$.}

\smallskip

\noindent (2) {\em $N$ and $M$ have the same set of lengths of closed
geodesics (without counting multiplicities).}\\[\baselineskip]
Deferring discussion of these for now, we complete the proof. From Claim
(1) and \cite{CM} the only possibility for $N$ is the so-called sister 
of the figure-eight knot. However, as can be checked by \texttt{Snap} for
example the shortest length of a closed geodesic in the sister is
approximately $0.86255\ldots$ and for the figure-eight knot complement it is 
$1.08707\ldots$. In Section \ref{shortlength}
we include a theoretical proof of the fact that the shortest
geodesic in the sister has length $0.86255\ldots$ and that the figure-eight
knot complement contains no closed geodesic of 
that length. \qed\\[\baselineskip]
Note that both (1) and (2) are standard applications of the Weyl Law and trace
formula in the setting of closed hyperbolic 3-manifolds (see for
example \cite{EGM} Chapter 5.3).  This is the approach taken here,
however, as we have already remarked upon, the cusped setting provides
additional challenges.  The proof of Claim (2) is given in
subsection \ref{dubi} and was kindly provided by Dubi Kelmer.

For Claim (1), the Weyl Law in the cusped setting takes the form
(see \cite{CoS} Chapter 7)

$$
A(\Gamma,T) + M(\Gamma, T) = 
\frac{1}{6 \pi^2} \vol(M)T^3 + O(T^2) + O(T\log~T).
$$
In the case at hand, for both $M$ and $N$ the left hand side is the same,
and so it follow that we can read off the volume (on letting 
$T\rightarrow \infty$). A different proof of equality of volume is
given in Section \ref{dubi}.
\end{proof}

Using \texttt{Snap} and \cite{GMM} we can prove the following by a similar
method.  We begin by recalling Theorem 7.4 of \cite{GMM}.

\begin{theorem}
\label{GabaiMM}
There are only ten finite volume orientable $1$-cusped hyperbolic 
$3$-manifolds with volume $\leq 2.848$. These are (in the notation
of the original \texttt{SnapPea} census):
$$
m003, m004, m006, m007, m009, m010, m011, m015, m016, m017.
$$
\end{theorem}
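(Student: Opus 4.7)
The plan is to follow the \emph{Mom-technology} strategy of Gabai--Meyerhoff--Milley \cite{GMM}. The overarching idea is to show that every orientable $1$-cusped finite volume hyperbolic $3$-manifold with sufficiently small volume is obtained by Dehn filling on a finite, enumerable list of ``parent'' manifolds built combinatorially from very few ideal cells. Once that list is in hand, one enumerates all fillings satisfying the volume bound and matches them against the SnapPea census.

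First I would make precise the notion of an \emph{internal Mom-$n$ structure} on a $1$-cusped hyperbolic $3$-manifold $M = \mathbf{H}^3/\Gamma$: a properly embedded compact submanifold $X \subset M$ whose boundary contains the cusp torus and which admits a handle decomposition using one $0$-handle, $n$ $1$-handles, and $n$ $2$-handles (equivalently, a thickened $2$-complex made of annuli and $n$ one-vertex disks satisfying the Mom axioms). The key geometric step, and the main technical obstacle, is showing that every orientable $1$-cusped hyperbolic $3$-manifold with $\mathrm{Vol}(M) \le 2.848$ contains an embedded internal Mom-$n$ structure for some $n \le 3$. This is the hardest part: it requires delicate estimates involving the maximal horoball cusp neighborhood, Margulis tubes around short geodesics, and a ``volume versus combinatorics'' accounting (essentially an upgrade of the Cao--Meyerhoff argument that the figure-eight knot complement and its sister are the two smallest-volume $1$-cusped hyperbolic $3$-manifolds). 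One isolates a controlled family of orthogeodesics connecting the maximal cusp to itself and shows, by embedded-tube volume estimates, that the low-volume hypothesis forces enough of them to assemble into a Mom-$n$ complex for small $n$.

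Second, one enumerates all hyperbolic Mom-$n$ manifolds for $n \le 3$ combinatorially; after discarding non-hyperbolic, non-minimal, and redundant cases this produces an explicit finite list (in \cite{GMM}, three Mom-$2$ and Mom-$3$ manifolds). Third, for each manifold $N$ in this list, one enumerates those hyperbolic Dehn fillings on the non-distinguished cusps of $N$ that yield $1$-cusped manifolds of volume $\le 2.848$. The volume monotonicity under Dehn filling together with Neumann--Zagier asymptotics bounds the possible filling slopes to a finite, effectively computable set. For each remaining slope, a rigorous (interval-arithmetic) volume computation, implemented for instance via \texttt{Snap} or \texttt{HIKMOT}, certifies whether the resulting manifold falls below $2.848$.

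Finally one identifies each surviving filling with one of the ten census manifolds $m003,\dots,m017$ by comparing invariants (volume, homology, isometry signature as in \cite{census-tet}); one verifies by direct census lookup that all ten listed manifolds indeed have volume at most $2.848$, completing the classification. I expect the embedding step in the first paragraph to be the main obstacle, since the other stages are finite enumerations supported by rigorous numerics, whereas the Mom-embedding requires the nontrivial hyperbolic-geometry inequalities that form the core of \cite{GMM}.
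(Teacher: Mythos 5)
Your outline is essentially correct, but you should know that the paper does not prove this statement at all: it is quoted verbatim as Theorem~7.4 of \cite{GMM}, so the ``proof'' in the paper is a citation. What you have written is a faithful high-level summary of the Gabai--Meyerhoff--Milley Mom-technology argument that underlies the cited result (forcing an internal Mom-$n$ structure with $n\le 3$ under the volume bound, enumerating the hyperbolic Mom-$2$/Mom-$3$ manifolds, bounding and certifying the relevant Dehn fillings, and matching against the census), so it matches the source rather than offering a different route. One small correction: the enumeration in \cite{GMM} yields three hyperbolic Mom-$2$ manifolds and eighteen further Mom-$3$ manifolds (twenty-one in all), not ``three Mom-$2$ and Mom-$3$ manifolds''; and carrying out your first step with the stated constants is a substantial piece of work that you are correctly flagging as the core difficulty, not something this paper attempts to reprove.
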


Note that $m003$ and $m004$ are the sister of the figure-eight knot and the
figure-eight knot respectively, $m006$ and $m007$ have the same volume
(approximately $2.56897\ldots$), $m009$ and $m010$ have the same
volume (approximately $2.66674\ldots$) and $m015$, $m016$, and $m017$
have the same volume (approximately $2.82812\ldots$). 

\begin{theorem}
\label{ten}
Let $M$ be any one the ten manifolds stated in Theorem \ref{GabaiMM}. Then
if $N$ is an orientable finite volume hyperbolic 3-manifold isospectral
with $M$ than $N$ is homeomorphic to $M$.
\end{theorem}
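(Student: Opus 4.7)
The plan is to follow the proof of Theorem \ref{main_fig8_2} step by step, substituting Theorem \ref{GabaiMM} for the Cao--Meyerhoff input. As a preliminary, since isospectrality of Definition \ref{isospectral} includes the poles of the scattering function, $N$ cannot be closed. Next I would apply the cusped Weyl law from \cite{CoS} Ch.\ 7,
$$
A(\Gamma,T) + M(\Gamma,T) = \frac{1}{6\pi^2}\vol(M)T^3 + O(T^2) + O(T \log T),
$$
and observe that the left-hand side is the same for $M$ and $N$, because $A(\Gamma,T)$ is determined by the discrete spectrum with multiplicities while $M(\Gamma,T)$ is determined by the scattering function, and both data are preserved by isospectrality. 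Letting $T \to \infty$ yields $\vol(N) = \vol(M)$, so by Theorem \ref{GabaiMM} the manifold $N$ must be isometric to one of $m003,m004,m006,m007,m009,m010,m011,m015,m016,m017$.

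These ten manifolds partition into five volume classes: $\{m003,m004\}$, $\{m006,m007\}$, $\{m009,m010\}$, $\{m011\}$ alone, and $\{m015,m016,m017\}$, and $N$ must lie in the same class as $M$. If $M=m011$ we are already done. Otherwise I would invoke Claim (2) of the proof of Theorem \ref{main_fig8_2} (Kelmer's trace-formula argument) to conclude that $M$ and $N$ share the same set of lengths of closed geodesics without multiplicities, and in particular the same shortest geodesic length $\ell_{\min}$. It then suffices to show that within each of the four non-singleton volume classes the quantity $\ell_{\min}$ takes pairwise distinct values on the distinct manifolds of that class, which pins down $N$ uniquely as $M$. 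For the class $\{m003,m004\}$ this is exactly what is proved in Section \ref{shortlength}.

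So the remaining work is to establish the analogous shortest-length distinction for the three classes $\{m006,m007\}$, $\{m009,m010\}$, $\{m015,m016,m017\}$. \texttt{Snap} readily reports numerical values of $\ell_{\min}$ that are pairwise different within each class, but for a rigorous proof one would follow the template of Section \ref{shortlength}: using the explicit \texttt{SnapPea} triangulation and ideal vertices to write down generators of the associated Kleinian groups, produce one short loxodromic element realizing the claimed $\ell_{\min}$, and then use a Margulis-tube / discreteness argument (or inspection of the small list of elements of bounded word length) to rule out any shorter loxodromic element.

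The main obstacle is this last upgrade from numerical to rigorous. It is not conceptually new beyond Section \ref{shortlength}, but each of the three remaining classes requires its own case analysis. Given that all ten manifolds in Theorem \ref{GabaiMM} are triangulated by very few ideal tetrahedra and many are arithmetic (so that the length spectrum is constrained to lie in an explicit number field), the case-by-case bookkeeping is expected to be short, and once completed the theorem follows.
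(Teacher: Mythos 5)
Your overall strategy is the paper's: rule out closed $N$ via the scattering data, equate volumes via the cusped Weyl law, invoke Theorem \ref{GabaiMM}, and then separate manifolds of equal volume using the length set supplied by Proposition \ref{dubiproof}. Two steps, however, do not go through as written.

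First, your reduction to comparing only the shortest geodesic length $\ell_{\min}$ rests on a false premise. The pairs $(m006,m007)$ and $(m009,m010)$ are \emph{not} separated by their shortest geodesics, and $m015$ is not separated from $m016$ and $m017$ by the shortest geodesic either; in each of these cases the paper has to pass to the \emph{second} shortest geodesic (only $m016$ versus $m017$ is settled by $\ell_{\min}$). So the program you outline --- certifying that $\ell_{\min}$ takes pairwise distinct values within each volume class, by a Section~\ref{shortlength}-style argument --- would fail outright for three of the four non-singleton classes; your parenthetical claim that \texttt{Snap} reports pairwise different values of $\ell_{\min}$ within each class is simply not what the software shows. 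The repair is cheap, since Proposition \ref{dubiproof} gives you the entire set of lengths and you may compare the first \emph{two} entries of the ordered, multiplicity-free length set, but any rigorous verification must then target the second-shortest length for those classes, not the systole.

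Second, Theorem \ref{GabaiMM} classifies only the $1$-cusped manifolds of volume at most $2.848$, so before invoking it you must know that $N$ has exactly one cusp; equality of volume alone does not supply this, and Definition \ref{isospectral} as literally stated presupposes rather than proves it. The paper deduces the cusp count of $N$ from \cite[Thm.2]{Kelmer}. (The paper also pauses to check that the discrete spectrum of each of the ten manifolds is infinite, as Definition \ref{isospectral} requires, using the $2$-generator property and Theorem \ref{venkov}; your write-up omits this, though it is the smaller of the omissions.)
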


\begin{proof} 
As in the proof of Theorem \ref{main_fig8_2}, the manifold $N$ must have 
cusps, and by \cite[Thm.2]{Kelmer} $N$ must have $1$ cusp. As before $N$ 
also has the same volume as $M$.
Note that all 10 manifolds in the above list have fundamental group
that is $2$-generator, and so the manifolds admit an orientation-preserving
involution.
Hence Theorem \ref{venkov} applies to show that the discrete spectrum in 
all these cases is infinite.
If $N$ is
isospectral to any one of the manifolds in the list then $N$ has the
same volume.  
Theorem \ref{main_fig8_2} deals with $m004$, and also
$m003$.  Since $m011$ is the unique manifold of that volume, then this
one is also accounted for.  The only possibilities that remain to be
distinguished are the pairs $(m006,m007)$, $(m009,m010)$ and the triple
$(m015,m016,m017)$.  This can be done using \texttt{snap}
to compute the start of the length spectrum. To deal with $m006$ and $m007$, 
and $m009$ and $m010$ one can use the second shortest geodesic. To 
distinguish $m015$ from $m016$ and $m017$ 
one can use the second shortest geodesics, and $m016$ and $m017$  are 
distinguished by the shortest geodesic. 
\end{proof}

Note that $m015$ is the knot $5_2$ in the standard tables and $m016$ is
the $(-2,3,7)$-pretzel knot, and so these knots, like the figure-eight knot,
have complements that are determined by their spectral data.

\subsection{Shortest length geodesics in the sister of the figure-eight 
knot}
\label{shortlength}

Here we give a theoretical proof of the distinction in the lengths
of the shortest closed geodesic in $M$ (as above) and its sister manifold $N$.
In what follows we let $M={\bf H}^3/\Gamma_1$ and $N={\bf H}^3/\Gamma_2$ 
As is well known $\Gamma_1,\Gamma_2 < \PSL(2,\BZ[\omega])$ of index $12$,
and where $\omega^2+\omega+1=0$.

As can be easily shown (see for example \cite[Thm.4.6]{NR}),  
the shortest translation
length of a loxodromic element in $\PSL(2,\BZ[\omega])$ occurs for an element
of trace $\omega$ or its complex conjugate $\overline{\omega}$ (up to sign) 
and is approximately $0.8625546276620610\ldots$; i.e. the length of the 
shortest closed geodesic in $N$.

Fix the following elements of trace $\omega$ and $\overline{\omega}$ 
(up to sign):

$$\gamma_0 = \left(\begin{array}{cc}
 0 & 1 \\
 -1 & \omega \\
\end{array}
\right), \gamma_0' = \left(\begin{array}{cc}
 0 & -1 \\
 1 & \omega \\
\end{array}
\right)
$$
and
$$
\gamma_1 = \left(\begin{array}{cc}
 0 & 1 \\
 -1 & \overline{\omega} \\
\end{array}\right), \gamma_1' = \left(\begin{array}{cc}
 0 & -1 \\
 1 & \overline{\omega} \\
\end{array}\right).$$

As can be checked for $i=0,1$,
$\gamma_i$ and $\gamma_i'$ are not conjugate in
$\PSL(2,\BZ[\omega])$ (e.g. using reduction modulo the $\BZ[\omega]$-ideal
$<\sqrt{-3}>$).

\begin{lemma}
\label{2conjugacy}
For $i=0,1$, $\gamma_i$ and $\gamma_i'$ are representatives of all
the $\PSL(2,\BZ[\omega])$-conjugacy classes of elements
of trace $\omega$ or $\overline{\omega}$ (up to sign). 
\end{lemma}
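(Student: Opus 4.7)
The plan is to bound the number of $\PSL_2(\BZ[\omega])$-conjugacy classes of trace $\omega$ from above by $2$, using the Latimer--MacDuffee correspondence combined with a $\GL_2$-to-$\SL_2$ unit argument; together with the already-established non-conjugacy of $\gamma_0$ and $\gamma_0'$, this attains the bound. The case of trace $\overline{\omega}$ then follows by complex conjugation.

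Set $\mathcal{O}=\BZ[\omega]$ and $R=\mathcal{O}[\alpha]$ with $\alpha^2-\omega\alpha+1=0$, so $R\subset L=\BQ(\alpha)$. From $\omega=\alpha+\alpha^{-1}$ one checks $R=\BZ[\alpha]$, and $\alpha$ has minimal polynomial
\[
m(x)=x^4+x^3+3x^2+x+1
\]
of discriminant $189$. The Minkowski lower bound for discriminants of totally imaginary quartic fields gives $|d_L|>43$, and among divisors $d$ of $189$ with $189/d$ a perfect square (namely $d\in\{21,189\}$) only $d=189$ clears this bound; hence $R=\mathcal{O}_L$. The Minkowski upper bound for $L$ is $\tfrac{3}{2\pi^2}\sqrt{189}\approx 2.09$, and since $2$ is inert in $\mathcal{O}$ with residue degree $2$, every prime of $\mathcal{O}_L$ above $2$ has absolute norm $\geq 4$. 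Therefore $\mathrm{Cl}(R)=1$.

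By the Latimer--MacDuffee correspondence over the PID $\mathcal{O}$, all matrices in $\SL_2(\mathcal{O})$ with characteristic polynomial $x^2-\omega x+1$ lie in a single $\GL_2(\mathcal{O})$-conjugacy class. Within it, the $\SL_2(\mathcal{O})$-conjugacy classes form a torsor under $\mathcal{O}^\times / N_{R/\mathcal{O}}(R^\times)$, where $N_{R/\mathcal{O}}(a+b\alpha)=a^2+ab\omega+b^2$. Since $\omega\in R^\times$ has norm $N(\omega)=\omega^2$ of multiplicative order $3$, we get $\mu_3\subseteq N_{R/\mathcal{O}}(R^\times)$ and hence $|\mathcal{O}^\times / N_{R/\mathcal{O}}(R^\times)|\leq [\mu_6:\mu_3]=2$. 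Thus at most two $\SL_2(\mathcal{O})$-conjugacy classes of trace $\omega$ exist; passing to $\PSL_2$ creates no further identifications here because $-\gamma$ has trace $-\omega\neq \omega$. Combined with the known non-conjugacy of $\gamma_0,\gamma_0'$ via reduction modulo $\langle\sqrt{-3}\rangle$, this shows $\{\gamma_0,\gamma_0'\}$ is a complete set of representatives. The trace-$\overline{\omega}$ case follows by applying complex conjugation on $\BZ[\omega]$, which induces an automorphism of $\PSL_2(\BZ[\omega])$ carrying the trace-$\omega$ sector bijectively onto the trace-$\overline{\omega}$ sector and sending $(\gamma_0,\gamma_0')$ to $(\gamma_1,\gamma_1')$.

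The main obstacle is the two-pronged nature of the upper bound: the Latimer--MacDuffee count $\mathrm{Cl}(R)=1$ alone would give only one class, contradicting the mod-$\sqrt{-3}$ distinction of $\gamma_0$ and $\gamma_0'$; the required factor of $2$ is produced entirely by the determinant/norm discrepancy between $\GL_2$ and $\SL_2$, and the delicate point is verifying the upper bound $|\mathcal{O}^\times/N_{R/\mathcal{O}}(R^\times)|\leq 2$, which reduces to the elementary observation that $\omega^2$ lies in the norm image. The discriminant bookkeeping to identify $R$ with $\mathcal{O}_L$ is a close second but manageable given the small size of $\mathrm{disc}(m)=189$.
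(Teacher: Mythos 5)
Your proposal is correct, and it arrives at the count of two conjugacy classes by the same underlying arithmetic as the paper but through a different mechanism. The paper identifies the same quartic field (its $k=\BQ(\theta)$ is your $L=\BQ(\alpha)$, since $\theta^2=\omega^2-4$), records the same invariants (discriminant $189$, class number one), and then cites the class-number formulae of Vign\'eras, Ch.~III.5, to conclude there are exactly two classes. You replace that citation with a self-contained argument: Latimer--MacDuffee over the PID $\BZ[\omega]$ plus $\mathrm{Cl}(\mathcal{O}_L)=1$ gives a single $\GL_2(\BZ[\omega])$-class, and the passage to $\SL_2$ splits it into at most $[\mathcal{O}^\times : N_{L/K}(\mathcal{O}_L^\times)]\le[\mu_6:\mu_3]=2$ classes because $N(\omega)=\omega^2$ generates $\mu_3$; the matching lower bound comes from the mod-$\langle\sqrt{-3}\rangle$ non-conjugacy of $\gamma_0,\gamma_0'$ already established in the paper, and your observation that $\PSL_2$ versus $\SL_2$ causes no collapsing (since $-\gamma$ has trace $-\omega\neq\omega$) is the right sanity check. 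Your discriminant bookkeeping is also sound: $\mathrm{disc}(m)=N_{K/\BQ}(\omega^2-4)\cdot d_K^2=21\cdot 9=189$, the Minkowski lower bound rules out $d_L=21$, and the absence of primes of norm $2$ in $\mathcal{O}_L$ (as $2$ is inert in $\BQ(\sqrt{-3})$) kills the class group. What the paper's route buys is brevity and an exact count without needing the explicit non-conjugate pair; what yours buys is transparency about where the factor of $2$ actually comes from (the determinant/norm discrepancy between $\GL_2$ and $\SL_2$, i.e.\ the unit index), at the mild cost of using the lower bound to saturate the inequality rather than computing the norm of the fundamental unit directly. Either is acceptable; yours would serve a reader who does not want to unwind the quaternionic formalism.
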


\begin{proof} 
Suppose that $t + t^{-1} = \omega$ with 
$t = (\omega + \theta)/2$ where $\theta =
  \sqrt{-9-{\sqrt{-3}} \over 2}$ and let $k={\BQ}(\theta)$. 
It can be checked that
$k$ has discriminant $189$ and has class number one.
Using this and the formulae in Chapter III.5 of \cite{Vig2} one 
deduces that the number of conjugacy classes of elements of $\PSL(2,O_3)$ 
of trace $\omega$ is $2$.

Since an element of trace $\overline{\omega}$ simply gives a conjugate
of $k$ given by ${\bf Q}(\overline{t})$, the same argument applies to
also give two conjugacy classes in this case.
\end{proof}

The claim about the lengths will follow once we establish that none of the 
$\PSL(2,\BZ[\omega])$-conjugacy classes of  
$\gamma_i$ and $\gamma_i'$ for $i=0,1$, meet $\Gamma_1$ and at least one
meets $\Gamma_2$. This can be done efficiently using \texttt{magma} as we
now describe. We begin with a preliminary observation.

Suppose that $M={\bf H}^3/\Gamma\rightarrow Q={\bf H}^3/\Gamma_0$ is
a finite sheeted covering of finite-volume orientable hyperbolic 3-orbifolds.
Denoting the covering degree by $d$, the action on cosets of $\Gamma$ in 
$\Gamma_0$ determines a permutation representation 
$\rho:\Gamma_0\rightarrow S_d$
with kernel $K$. Suppose that $[g_1],\ldots ,[g_r]$ denote the conjugacy
classes of loxodromic elements in $\Gamma_0$ of minimal translation
length $\ell$. Then $M$ contains an element of length $\ell$ if and only
if $\Gamma \cap [g_i]\neq \emptyset$ for some $i\in\{1,\ldots ,r\}$, and this
happens if and only $\rho(\Gamma)\cap [\rho(g_i)]\neq \emptyset$
for some $i\in\{1,\ldots ,r\}$.

We apply this in the case that $N$ is the Bianchi orbifold 
$Q={\bf H}^3/\PSL(2,\BZ[\omega])$ and $M$ is either the figure-eight knot 
complement or
its sister.  In the former case, the permutation representation has kernel
the congruence subgroup $\Gamma(4)<\PSL(2,\BZ[\omega])$ (of index $1920$) 
and in the latter
case the permutation representation has kernel
the congruence subgroup $\Gamma(2)$ (of index $60$).  To implement
the magma routines we use the the presentation
of $\PSL(2,\BZ[\omega])$ from \cite{GS}, and express the subgroups $\Gamma_1$
and $\Gamma_2$ in terms of these generators. Setting

\smallskip

\centerline{$a=\left(\begin{array}{cc}
 1 & 1 \\
 0 & 1 \\
\end{array}\right),$
$b=\left(\begin{array}{cc}
 0 & -1 \\
 1 & 0 \\
\end{array}\right),$ and
$c=\left(\begin{array}{cc}
 1 & \omega \\
 0 & 1 \\
\end{array}\right)$,}
we have
\begin{align*}
\PSL(2,\BZ[\omega]) &= <a,b,c|
b^2 = (ab)^3 = (acbC^{2}b)^2 = (acbCb)^3 =
A^{2}CbcbCbCbcb = [a,c] = 1> \\
\Gamma_1 &= <a,bcb> \\
\Gamma_2 &= <a^2, bcabaCbCb>.
\end{align*}

The elements $\gamma_i$ and $\gamma_i'$ for $i=0,1$ are described in terms 
of these generators as:

$$
\gamma_0=bC, \gamma_0'=bc,\gamma_1=bac,\gamma_1'=bAC.
$$

Below we include the magma routine that executes the above computation showing
no conjugates lie in $\Gamma_1$ but at least one does in $\Gamma_2$.

\begin{verbatim}
g<a,b,c>:=Group<a,b,c| b^2, (a*b)^3, (a*c*b*c^-2*b)^2,
(a*c*b*c^-1*b)^3, a^-2*c^-1*b*c*b*c^-1*b*c^-1*b*c*b, (a,c)>;
h1:= sub<g|a,b*c*b>;
h2:= sub<g|a^2, b*c*a*b*a*c^-1*b*c^-1*b>;
print AbelianQuotientInvariants(h1);
\\[0]
print AbelianQuotientInvariants(h2);
\\[ 5, 0 ]
x0:=g!b*c^-1;
x1:=g!b*c;
y0:=g!b*a*c;
y1:=g!b*a^-1*c^-1;
f1,i1,k1:=CosetAction(g,h1);
print Order(i1);
\\1920
f2,i2,k2:=CosetAction(g,h2);
print Order(i2);
\\60
l:=Class(i1,f1(x0)) meet Set(f1(h1));
print #l;                            
\\0
l:=Class(i1,f1(x1)) meet Set(f1(h1));
print #l;                            
\\0
l:=Class(i1,f1(y0)) meet Set(f1(h1));
print #l;                            
\\0
l:=Class(i1,f1(y1)) meet Set(f1(h1));
print #l;                            
\\0
k:=Class(i2,f2(x0)) meet Set(f2(h2)); 
print #k;                            
\\2
\end{verbatim}

\subsection{Determining the length set}
\label{dubi}

\begin{proposition}
\label{dubiproof}
Let $M_1$ and $M_2$ be finite volume orientable 1-cusped hyperbolic 3-manifolds
that are isospectral.  Then they have the same set of lengths of closed
geodesics (without counting multiplicities).
\end{proposition}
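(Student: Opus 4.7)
The plan is to apply the Selberg trace formula in the 1-cusped setting to both $M_1$ and $M_2$ and extract the set of real lengths from the geometric side. Write $M_i=\mathbf{H}^3/\Gamma_i$ and let $\phi_i(s)$ be the scattering functions. For a sufficiently general admissible test function $h$ with inverse Fourier transform $g$, the trace formula (see \cite[Chpt.\ 6]{EGM}) schematically takes the form
\[
\sum_j h(r_j^{(i)}) \;+\; (\text{scattering + residual terms involving }\phi_i)
\;=\; I_i(h) + P_i(h) + L_i(h),
\]
where $I_i(h)$ is the identity contribution (proportional to $\vol(M_i)$), $P_i(h)$ is the parabolic contribution (depending only on the cusp structure), and $L_i(h)$ is a weighted sum over conjugacy classes of loxodromic elements.

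First I would observe that the isospectrality hypothesis of Definition~\ref{isospectral}, combined with the fact that both manifolds have one cusp, forces the entire spectral side of the trace formula to agree for $M_1$ and $M_2$ (for the scattering/residual contributions, the equality of poles and multiplicities of $\phi_i$ is precisely what is needed). Consequently the geometric sides agree. Since the identity term determines $\vol(M_i)$ (giving another proof of Claim (1) of Theorem~\ref{main_fig8_2}) and the parabolic term depends only on having one cusp, subtracting these yields the identity
\[
L_1(h) = L_2(h) \qquad \text{for all admissible } h.
\]

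Next I would unfold $L_i(h)$: each primitive loxodromic conjugacy class $[\gamma_0]$ in $\Gamma_i$, together with its powers, contributes a sum of terms of the form
\[
\frac{\ell_{\gamma_0}}{\bigl|1-e^{-L_{\gamma_0^k}}\bigr|^{2}}\, g(k\ell_{\gamma_0})\, e^{\,i k \theta_{\gamma_0}},
\]
where $L_{\gamma_0}=\ell_{\gamma_0}+i\theta_{\gamma_0}$ is the complex translation length. Suppose, for contradiction, that some length $\ell_0$ lies in the real length set of $M_1$ but not of $M_2$. I would then choose a family of test functions $g_{\ep}$ concentrated in a small neighborhood of $\ell_0$, disjoint from all lengths of $M_2$ and from all powers of other primitive lengths in $M_1$ close to $\ell_0$. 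For such $g_\ep$ the $M_2$ side vanishes, while the $M_1$ side is a finite sum of nonzero complex weights $\ell_{\gamma_0}/|1-e^{-L_{\gamma_0}}|^{2}\cdot e^{i\theta_{\gamma_0}}$ over the finitely many primitive classes of real length $\ell_0$.

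The main obstacle is ruling out cancellation among these finitely many complex contributions at a single real length. In dimension three this is genuinely delicate because the weights carry a holonomy phase $e^{i\theta}$. To handle it, I would not just vary $g_\ep$ as an even function on $\mathbb{R}$ but use a two-parameter family of test functions that also isolate the imaginary part of $L$ (equivalently, test the wave trace against functions with a prescribed holonomy content, or use short-time heat-kernel asymptotics localized to length $\ell_0$). This forces each complex length contribution to be detected separately, so no cancellation is possible, and one deduces in fact that the full complex length spectrum agrees (with multiplicities), from which the conclusion about sets of real lengths is immediate. This is essentially the content of \cite[Thm.\,2]{Kelmer} in the cusped setting, which one may alternatively invoke directly to obtain the stronger statement about complex length spectra.
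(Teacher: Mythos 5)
Your overall strategy is the same as the paper's: apply the Selberg trace formula to both manifolds, use the isospectrality hypothesis (including equality of scattering poles) to cancel the spectral sides, and then localize test functions near a putative length $\ell_0$ to detect the loxodromic contribution. However, two points need attention.

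First, the ``main obstacle'' you identify --- cancellation among holonomy phases $e^{ik\theta}$ --- does not actually arise, because you have written the wrong weight. In the trace formula for the scalar Laplacian the loxodromic class $\gamma$ contributes $\ell_{\gamma_0}\,g(\ell_\gamma)$ divided by $|a(\gamma)-a(\gamma)^{-1}|^2$ (an absolute value squared, hence a positive real); there is no residual phase $e^{ik\theta_{\gamma_0}}$ --- such phases occur only for the twisted/1-form trace formula. Consequently the total weight attached to a fixed real length $\ell$ (what the paper calls $m_\Gamma(\ell)$) is a sum of positive terms and is nonzero exactly when $\ell$ is the length of some closed geodesic. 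This is precisely the observation the paper's proof rests on, and it makes your proposed detour through two-parameter families of holonomy-sensitive test functions (or short-time heat kernel asymptotics) unnecessary; as sketched, that detour is also too vague to count as a proof. Likewise, your fallback of citing Kelmer's theorem goes in the wrong direction: that result deduces spectral data from the complex length spectrum, not conversely.

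Second, you dismiss the parabolic contribution as ``depending only on the cusp structure,'' but the constants $a_\Gamma$ and $b_\Gamma$ genuinely depend on $\Gamma$ and are not a priori equal for $M_1$ and $M_2$. Moreover, a bump function $g$ supported near $\ell_0$ and away from $0$ kills $g(0)$ but not $h(0)=\int g$, so the term $(b_{\Gamma_1}-b_{\Gamma_2})h(0)$ survives your localization and would contaminate the conclusion. The paper eliminates these terms in stages: first test functions with $h(0)=0$ but $\int h(r)r^2\,dr\neq 0$ to get equality of volumes, then $h(0)=1$ supported away from all lengths to get $b_{\Gamma_1}=b_{\Gamma_2}$, then support near $0$ to get $a_{\Gamma_1}=a_{\Gamma_2}$, and only then localizes near each $\ell>0$. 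You need some such preliminary step before your localization argument closes.
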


Before commencing with the proof we recall the version of the trace
formula given in \cite{EGM} Chapter 6 Theorem 5.1. This needs some notation. 
Let $M={\bf H}^3/\Gamma$ be $1$-cusped finite volume orientable hyperbolic
3-manifold. Given a loxdromic element $\gamma\in \Gamma$ of complex
length $\ell_\gamma+i\theta_\gamma$, we denote by $\gamma_0$ the
unique primitive element such that $\gamma=\gamma_0^j$.
For convenience, we denote the discrete spectrum by 
$\lambda_k=1+r_k^2$, and by $\phi(s)$ is (as before) the scattering function.
The trace formula in this case then states (\cite{EGM} Chapter 6 Theorem 5.1):

\begin{theorem}
For any even compactly supported test function $g\in C^\infty_c(\BR)$ let 
$h(r)$ denote it's Fourier transform. Then
\begin{align*}
\sum_{k}h(r_k)-\frac{1}{4\pi}\int_{\BR}h(r)\frac{\phi'}{\phi}(ir)dr
& =\frac{\vol(M)}{4\pi^2}\int_{\BR}h(t)r^2dr\\
&+4\pi \sum_{\gamma\in \Gamma_{\mathrm{lox}}}\frac{\ell_{\gamma_0} 
g(\ell_{\gamma})}{\sqrt{2\sinh(\frac{\ell_\gamma+i\theta_\gamma}{2})}}
+a_\Gamma g(0)+b_\Gamma h(0)\\
&-\frac{1}{2\pi}\int_{\BR}h(r)\frac{\Gamma'}{\Gamma}(1+ir)dr 
\end{align*}
where the constants $a_\Gamma$ and $b_\Gamma$ are explicit constants depending 
only on $\Gamma$ and the summation on the right-hand side is over
$\Gamma_{\mathrm{lox}}$ which represents conjugacy classes of loxodromic 
elements in $\Gamma$.
\end{theorem}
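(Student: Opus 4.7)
The plan is to establish this as the Selberg trace formula for a finite-volume 1-cusped hyperbolic 3-manifold $M=\mathbf{H}^3/\Gamma$ by computing the trace of a suitable automorphic integral operator in two ways. First I would choose an even $g\in C^\infty_c(\BR)$ and build a point-pair invariant $k(z,w)$ on $\mathbf{H}^3$ whose spherical (Harish--Chandra) transform equals the Fourier transform $h(r)$ of $g$; concretely, $k$ is recovered from $g$ via the inverse Abel transform on $\mathbf{H}^3$. The automorphic kernel $K(z,w)=\sum_{\gamma\in\Gamma}k(z,\gamma w)$ then defines an operator $L_h$ on $L^2(M)$, and the two evaluations of $\Tr(L_h)$ will yield the left- and right-hand sides of the stated formula.

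On the spectral side, the decomposition $L^2(M)=L^2_{\mathrm{disc}}(M)\oplus L^2_{\mathrm{cont}}(M)$ gives an eigenvalue contribution $\sum_k h(r_k)$ from the discrete part (including any residual eigenvalues coming from poles of the Eisenstein series in $(1,2]$). The continuous part is spanned by $E(\,\cdot\,,1+ir)$; pairing $L_h$ against these and invoking the Maass--Selberg relations collapses their contribution precisely to $-\frac{1}{4\pi}\int_{\BR}h(r)\frac{\phi'}{\phi}(ir)\,dr$.

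On the geometric side, I partition $\Gamma=\{1\}\sqcup\Gamma_{\mathrm{par}}\sqcup\Gamma_{\mathrm{lox}}$ into conjugacy classes and integrate the kernel class by class. The identity contributes $\frac{\vol(M)}{4\pi^2}\int_{\BR}h(r)r^2\,dr$ by the Plancherel formula on $\mathbf{H}^3$. Each loxodromic class yields an orbital integral that, using the explicit Abel-transform relation between $k$ and $g$, evaluates to the summand $\frac{4\pi\,\ell_{\gamma_0}\,g(\ell_\gamma)}{\sqrt{2\sinh((\ell_\gamma+i\theta_\gamma)/2)}}$. The parabolic classes stabilized by the cusp produce the constants $a_\Gamma g(0)+b_\Gamma h(0)$ together with $-\frac{1}{2\pi}\int_\BR h(r)\frac{\Gamma'}{\Gamma}(1+ir)\,dr$, where the gamma-function term arises from the standard Mellin evaluation of an incomplete unipotent orbital integral at the cusp, and $a_\Gamma,b_\Gamma$ depend only on the geometry of the peripheral lattice.

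The main obstacle is that $M$ is non-compact, so $L_h$ is not a priori trace-class and the naive trace diverges. I would truncate $M$ at cusp height $Y$, obtaining a compact region $M_Y$ on which a literal finite trace exists, and then let $Y\to\infty$. Both sides contain $Y$-divergent pieces (the Eisenstein contribution is not $L^2$ on the spectral side; the parabolic orbital integral blows up logarithmically on the geometric side), and the crucial analytic fact is that the Maass--Selberg relations force these divergences to cancel exactly, leaving the scattering-function integral on the spectral side and the cuspidal constants plus the $\Gamma'/\Gamma$-integral on the geometric side. Absolute convergence of the loxodromic sum follows from the compact support of $g$ together with a standard count of closed geodesics of bounded length; the delicate point is the precise matching of the two $Y$-divergent pieces, which ultimately rests on the fact that the constant term of $E(w,1+ir)$ along the cusp is $y^{1+ir}+\phi(1+ir)\,y^{1-ir}$.
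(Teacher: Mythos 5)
The paper does not prove this statement at all: it is quoted verbatim (up to notational adjustments for the one-cusp, torsion-free case) from Elstrodt--Grunewald--Mennicke, Chapter 6, Theorem 5.1, and the authors explicitly flag it as a recalled result. Your sketch is a correct outline of the standard derivation given in that reference --- automorphic kernel built from a point-pair invariant, spectral versus conjugacy-class evaluation of the trace, truncation at cusp height $Y$, and cancellation of the divergent Eisenstein and unipotent contributions via the Maass--Selberg relations --- so it matches the proof of the cited source rather than anything in the paper itself.
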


\begin{remark}
Our notation is slightly different from \cite{EGM} and there are less terms 
due to our assumptions of only one cusp and no torsion. Also, the 
$\Gamma(s)$ in the last integral denotes the $\Gamma$-function and is not 
related to the Kleinian group $\Gamma$.
\end{remark}

We now prove Proposition \ref{dubiproof}.

\begin{proof}
For our purposes it will be helpful to rewrite the sum over the
loxodromic classes and collect all the classes with the same
$\ell_\gamma$ together. That is,
\begin{align*} 
\sum_{\gamma\in \Gamma_{\mathrm{lox}}}
\frac{\ell_{\gamma_0} g(\ell_{\gamma})}{\sqrt{2\sinh(
\frac{\ell_\gamma+i\theta_\gamma}{2})}}
&=
\sum_{\ell}\left(\mathop{\sum_{\gamma\in \Gamma_{\mathrm{lox}}}}_{\ell_\gamma=\ell}
\frac{\ell_{\gamma_0} }{\sqrt{2\sinh(\frac{\ell_\gamma+i\theta_\gamma}{2})}}
\right) g(\ell)\\
&=\sum_{\ell} m_\Gamma(\ell)g(\ell)\
\end{align*}
where we defined the twisted multiplicities $m_\Gamma(\ell)$ by
$$
m_\Gamma(\ell)=\mathop{\sum_{\gamma\in \Gamma_{\mathrm{lox}}}}_{\ell_\gamma=\ell}
\frac{\ell_{\gamma_0} }{\sqrt{2\sinh(\frac{\ell_\gamma+i\theta_\gamma}{2})}},
$$
and the sum on the right is over the set of lengths of closed geodesics in 
$M$ (in fact we can take the sum over all $\ell>0$ since $m_\Gamma(\ell)=0$ 
if $\ell$ is not a length of a closed geodesic).

We can thus rewrite the trace formula as 
\begin{align*}
\sum_{k}h(r_k)-\frac{1}{4\pi}\int_{\BR}h(r)\frac{\phi'}{\phi}(ir)dr
&=\frac{\vol(M)}{4\pi^2}\int_{\BR}h(t)r^2dr
+4\pi \sum_{\ell}m_\Gamma(\ell)g(\ell)\\
&+a_\Gamma g(0)+b_\Gamma h(0)-\frac{1}{2\pi}
\int_{\BR} h(r)\frac{\Gamma'}{\Gamma}(1+ir)dr
\end{align*}

Noting that $m_\Gamma(\ell)\neq 0$ if and only if $\ell$ is a length of a 
closed geodesic in $M$, the result will follow from the next proposition.

\begin{proposition}
\label{dubi2}
With $M_1$ and $M_2$ as in Proposition \ref{dubi}, then
$\Vol(M_1)=\Vol(M_2)$, 
$a_{\Gamma_1}=a_{\Gamma_2}$, $b_{\Gamma_1}=b_{\Gamma_2}$, and 
$m_{\Gamma_1}(\ell)=m_{\Gamma_2}(\ell)$ for any $\ell>0$.  
\end{proposition}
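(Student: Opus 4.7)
\emph{Strategy.} The plan is to apply the trace formula quoted above to each $M_i$, subtract the two identities, and extract each of the four claimed equalities by a judicious choice of the even test pair $(g,h)$. The spectral side of the trace formula pairs $h$ with the discrete spectrum and with a scattering integral against $\phi'/\phi$, while the final integral on the geometric side involves only the classical $\Gamma$-function and is manifold-independent. The discrete-spectrum sums $\sum_k h(r_k)$ agree for $M_1$ and $M_2$ by isospectrality, and the $\Gamma$-function integral cancels trivially. For the scattering integrals I would argue that $\phi_1\equiv\phi_2$: by Definition \ref{isospectral} the poles (with multiplicities) agree, the functional equation $\phi(s)\phi(2-s)=1$ transports this into equality of zeros with multiplicities, and a Hadamard-type factorization of $\phi$ using the standard growth estimates in \cite[Ch.~6]{EGM} then pins down $\phi$ from its divisor. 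Granting this, the entire spectral side of the trace formula is the same for $M_1$ and $M_2$.

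\emph{Reducing to a geometric identity and separating terms.} With the spectral sides canceling, subtraction of the two trace formulas yields, for every even $g\in C_c^\infty(\BR)$ with Fourier transform $h$, the distributional identity
\begin{equation*}
0=\frac{\vol(M_1)-\vol(M_2)}{4\pi^2}\int_\BR h(r) r^2 dr + 4\pi\sum_{\ell>0}\bigl(m_{\Gamma_1}(\ell)-m_{\Gamma_2}(\ell)\bigr)g(\ell) + (a_{\Gamma_1}-a_{\Gamma_2})g(0) + (b_{\Gamma_1}-b_{\Gamma_2})h(0).
\end{equation*}
Fourier inversion gives the two key identifications $\int_\BR h(r)r^2 dr=-2\pi g''(0)$ and $h(0)=\int_\BR g(x)\,dx$, so the two ``non-local'' terms are controlled by $g''(0)$ and by the total mass of $g$, while the other two terms are genuine point evaluations. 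Let $L$ be the union of the length spectra of $M_1$ and $M_2$, a countable subset of $(0,\infty)$. First take $g$ supported in a small neighborhood of $\pm\ell_0$ for some $\ell_0>0$ outside $L$, with $\int g\ne 0$; then $g(0)=g''(0)=0$ and every $g(\ell)$ for $\ell\in L$ vanishes, so the identity collapses to $(b_{\Gamma_1}-b_{\Gamma_2})h(0)=0$, giving $b_{\Gamma_1}=b_{\Gamma_2}$. Next take $g$ supported in a small neighborhood of $0$ disjoint from $L$; the sum over $\ell$ drops out and the identity becomes $-\tfrac{\vol(M_1)-\vol(M_2)}{2\pi}g''(0)+(a_{\Gamma_1}-a_{\Gamma_2})g(0)=0$. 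Choosing $g$ with $g(0)=0$ and $g''(0)\ne 0$ yields $\vol(M_1)=\vol(M_2)$, while $g$ with $g(0)\ne 0$ and $g''(0)=0$ yields $a_{\Gamma_1}=a_{\Gamma_2}$. Finally, for each fixed $\ell>0$, take $g$ supported in a small neighborhood of $\pm\ell$ disjoint from $\{0\}\cup(L\setminus\{\ell\})$: the identity reduces to $4\pi\bigl(m_{\Gamma_1}(\ell)-m_{\Gamma_2}(\ell)\bigr)g(\ell)=0$, whence $m_{\Gamma_1}(\ell)=m_{\Gamma_2}(\ell)$.

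\emph{Main obstacle.} The test-function manipulations are essentially routine once the spectral sides are known to cancel, so the genuine technical hurdle is the equality of the scattering contributions, i.e.\ $\phi_1'/\phi_1\equiv\phi_2'/\phi_2$ on $i\BR$. Definition \ref{isospectral} records only equality of poles and multiplicities, whereas the trace-formula input is sensitive to the whole logarithmic derivative, so one must upgrade pole data to an identity of meromorphic functions. I expect this to be the one step requiring real analytic input --- combining the scattering functional equation, the standard order estimates on $\phi$, and a Hadamard product representation --- after which the distributional separation above finishes the proof.
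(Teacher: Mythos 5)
Your proposal is, in substance, the paper's own proof: subtract the two trace formulas, observe that the spectral sides cancel, and then extract $\Delta V$, $\Delta a$, $\Delta b$ and $\Delta m(\ell)$ one at a time by localizing an even test function near $0$, near a point outside the union of the length sets, and near each individual $\ell$. Your use of $\int_\BR h(r)r^2\,dr=-2\pi g''(0)$ to convert the volume term into a point evaluation is a small, correct variant of the paper's choice of a $g$ supported away from $0$ and all lengths with $h(0)=0$ but $\int_\BR h(r)r^2\,dr\neq 0$; either separation works, and the order in which the four constants are killed is immaterial. The one place you go beyond the paper is in flagging the cancellation of the scattering integrals $\int_\BR h(r)\tfrac{\phi_i'}{\phi_i}\,dr$: the paper simply asserts that ``the left hand side cancels,'' while Definition \ref{isospectral} records only equality of pole sets with multiplicities, so you are right that this step deserves justification in the applications to Theorems \ref{main_fig8_2} and \ref{ten} (in the Sunada construction it is free, by Lemma \ref{sameeis}). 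One caution about your proposed fix: the functional equation and the standard growth estimates determine $\phi$ from its divisor only up to a factor of the form $c\,a^{2-2s}$ with $a>0$ (coming from the leading term of the Dirichlet series in the constant Fourier coefficient of the Eisenstein series), and the logarithmic derivative of such a factor is the constant $-2\log a$ on the critical line. In the difference of trace formulas this ambiguity contributes a fixed multiple of $\int_\BR h(r)\,dr=2\pi g(0)$, so it is absorbed into the $g(0)$ term: the conclusions $\Delta V=\Delta b=0$ and $\Delta m(\ell)=0$ for all $\ell>0$ (hence the length-set statement that Proposition \ref{dubiproof} actually needs) are unaffected, but the literal equality $a_{\Gamma_1}=a_{\Gamma_2}$ requires in addition pinning down this exponential factor, e.g.\ from the asymptotics of $\phi$ or by adding it to the spectral data. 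With that caveat noted, your argument is sound and matches the paper's.
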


\begin{proof}
Let $\Delta V=\Vol(M_1)-\Vol(M_2)$, $\Delta a=a_{\Gamma_1}-a_{\Gamma_2}$, 
$\Delta b=b_{\Gamma_1}-b_{\Gamma_2}$, and 
$\Delta m(\ell)=m_{\Gamma_1}(\ell)-m_{\Gamma_2}(\ell)$. Taking the difference 
between the two trace formulas, the left hand side cancels and we get that 
for any even test function $g\in C^\infty_c(\BR)$ 
\begin{align*}
\frac{\Delta V}{4\pi^2}\int_{\BR} h(t)r^2dr
+4\pi \sum_{\ell}\Delta m(\ell)g(\ell)+\Delta a g(0)+\Delta b h(0)&=0
\end{align*}
We can first take a test function $g$ to be supported away from all
the lengths in the length spectrum of both manifolds and from 0 (e.g.,
make it supported in the interval between zero and the shortest
length), and satisfy that $h(0)=\int g(x)dx=0$ but 
$\int_{\BR}h(t)r^2dr\neq 0$. Using such a test function we can deduce that
$\Delta V=0$ (which was already deduced from Weyl's law). The
difference of the trace formula hence simplifies to
\begin{align*}
4\pi \sum_{\ell}\Delta m(\ell)g(\ell)+\Delta a g(0)+\Delta b h(0)&=0
\end{align*}
Next, taking $g$ supported away from all lengths and 0 but this time with 
$h(0)=1$ we conclude that $\Delta b=0$, and then taking $g$ supported on 
a small neighborhood of $0$ (smaller than the length of the shortest 
geodesic) we conclude that $\Delta a=0$ as well. From this we get that for 
any test function
\begin{align*}
\sum_{\ell}\Delta m(\ell)g(\ell)&=0
\end{align*}
Finally, for each $\ell>0$ we can take $g$ to be supported in a small 
enough neighborhood of $\ell$, such that no other length in the length 
spectrum are in the support (except $\ell$ itself if it happens to be in 
the length spectrum of one of the manifolds).
This implies that $\Delta m(\ell)=0$ as well for any $\ell>0$, thus 
concluding the proof.
\end{proof}

The proof of Proposition \ref{dubiproof} is now complete.
\end{proof}

\subsection*{Acknowledgments}
{\em The authors wish to thank Nathan Dunfield, Craig Hodgson, Neil Hoffman,
and Dubi Kelmer for enlightening communications. 
The second author would also like to thank Amir Mohammadi
for several very useful conversations, Werner M\"uller for several very
helpful conversations on the notion of isospectrality for cusped manifolds,
and Peter Sarnak for many helpful discussions and correspondence on the 
nature of the discrete spectrum and arithmeticity. We would particularly
like to thank Dubi Kelmer who supplied the details of the proof
of Proposition \ref{dubiproof}. 
The second author also 
wishes to thank Max Planck Instit\"ut, Bonn where some of this work was 
carried out.
}


\bibliographystyle{plain}
\bibliography{biblio}
\end{document}